\newtheoremstyle{definition}
{10pt}
{10pt}
{}
{}
{\bfseries}
{}
{.5em}
{}
\newtheoremstyle{plain}
{10pt}
{10pt}
{\itshape}
{}
{\bfseries}
{}
{.5em}
{}
\theoremstyle{plain}	
\newtheorem{thm}{Theorem}[section]
\newtheorem{lem}[thm]{Lemma}
\theoremstyle{definition}
\newcommand{\RR}{\mathbb{R}}      
\newcommand{\vol}{\operatorname{vol}}
\def\BB{\mathbb{B}}
\def\EE{\mathbb{E}}
\def\NN{\mathbb{N}}
\def\PP{\mathbb{P}}
\def\RR{\mathbb{R}}
\def\SS{\mathbb{S}}
\def\BBn{\mathbb{B}^n}
\def\SSn{\mathbb{S}^{n-1}}
\def\a{\alpha}
\def\b{\beta}
\def\l{\lambda}
\def\dint{\textup{d}}
\begin{document}

\title{\bfseries Approximation of smooth convex bodies by random polytopes }

\author{Julian Grote\footnotemark[1]\ \footnotemark[2]\ \ and Elisabeth M. Werner \footnotemark[3]\ \footnotemark[4]}

\date{}
\renewcommand{\thefootnote}{\fnsymbol{footnote}}
\footnotetext[1]{Ruhr University Bochum, Faculty of Mathematics, D-44780 Bochum, Germany. E-mail: julian.grote@rub.de}

\footnotetext[2]{Julian Grote was supported by the Deutsche Forschungsgemeinschaft (DFG) via RTG 2131 High-dimensional Phenomena in Probability - Fluctuations and Discontinuity.}

\footnotetext[3]{Department of Mathematics, Case Western Reserve University, Cleveland, OH, USA. E-mail: elisabeth.werner@case.edu}

\footnotetext[4]{Elisabeth Werner was partially supported by NSF grant DMS-1504701}

\maketitle

\begin{abstract}
Let $K$  be a convex body in $\RR^n$ 
and $f : \partial K \rightarrow \RR_+$  a continuous, strictly positive function with $\int\limits_{\partial K} f(x) \dint \mu_{\partial K}(x) = 1$.
We give an upper bound for the approximation of $K$  in  the symmetric difference metric by 
an arbitrarily positioned  polytope $P_f$ in $\RR^n$ having a fixed number of vertices.  
This generalizes a  result  by Ludwig, Sch\"utt and Werner \cite{LudwigSchuettWerner}.
The polytope $P_f$ is obtained by a random 
construction via a probability measure with density $f$. 
In our result,  the dependence on the number of vertices is optimal.  With the optimal density $f$, the dependence on $K$ in our result is
also optimal.
\vspace{1mm}
\\
{\bf Keywords}. {Random polytopes, approximation, convex bodies.}\\
{\bf MSC}. 52A22, 60D05.
\end{abstract}

\section{Introduction and main result} 
Much research has been devoted  to the subject of 
approximation of convex bodies by polytopes.  This is due to the fact that  it  is fundamental  in convex geometry   and has 
applications  in stochastic geometry,  
complexity,  geometric algorithms and many more. We refer to, e.g.,  \cite{BuRe, Edelsbrunner, Gardner1995, GardnerKiderlenMilanfar, GlasauerGruber, GruberI, GruberII, PW2,  Reitzner2004II} for further details.
\par
Random constructions are especially  advantageous:  when  one considers random approximation, many metrics, namely metrics related  
to quermassintegrals of a convex body (see, e.g., \cite{Gardner1995, SchneiderBook}), carry essentially the same amount of information as for best approximation.
\par
Volume, surface area and also the mean width are such quermassintegrals. Metrics related to those quermassintegrals are 
 the symmetric difference 
metric and  the surface deviation  which reflect the volume  deviation, respectively the surface deviation, 
of the approximating and approximated objects.  But many other metrics have been considered as well.
In particular, approximation of a convex body $K$ by inscribed or circumscribed polytopes 
with respect to these  metrics  has been studied extensively 
and many of the major questions have been resolved.  The surveys and  books by Gruber \cite {Gr3, Gr4, GruberBook} and the references cited therein and,  e.g., \cite{Ba1, Boeroetzky2000, BoeroetzkyCsikos2009, BoeroetzkyReitzner2004, Groemer2000,GRS1, Ludwig1999, Reitzner2004, Schneider1981, SchneiderWeil, SchuettWerner2003} 
are excellent sources in this context.
\par
Typically, approximation by polytopes involves side conditions,  like a prescribed  
number of
vertices, or, more generally,  
$k$-dimensional faces \cite{Boeroetzky2000}.   Most often in the literature,  it is required  that the body contains the approximating polytope or vice versa. This is  too restrictive as a requirement and it 
needs to be dropped. Results in this direction for the Euclidean ball were obtained in \cite{LudwigSchuettWerner}  for the symmetric difference metric and in \cite{HoehnerSchuettWerner} for the surface deviation. These results show that one gains by a factor of dimension if one drops the restriction.
\par
Here,  we discuss the symmetric difference metric, which 
for convex bodies $K$ and $L$ in $\mathbb{R}^n$  is defined as 
\begin{equation} \label{sdm}
\text{vol}_n(K \Delta L):=\text{vol}_n \left( K \cup L \right)-\text{vol}_n \left( K \cap L \right). 
\end{equation}
In the case when $K$ is the $n$-dimensional unit ball $\BBn$ it was proved by Ludwig, Sch\"utt and Werner \cite{LudwigSchuettWerner} that there exists an arbitrarily positioned polytope $P$ having $N$ vertices such that for all sufficiently large $N$,
\begin{align}\label{Kugel}
\text{vol}_n(\BBn \Delta P) \le a\, N^{-\frac{2}{n-1}}\, \kappa_n.
\end{align}
Here,  $a\in (0,\infty)$ is  an absolute constant and  $\kappa_n= \text{vol}_n \left(\BBn\right) $ is the volume of the $n$-dimensional unit ball.
The corresponding result for   convex bodies $K$  in $\mathbb{R}^n$  that are  $C^2_+$, i.e., have twice continuously  differentiable boundary $\partial K$  with strictly positive Gaussian curvature $\kappa_K(x)$, $x\in \partial K$,  then follows from (\ref{Kugel}), together with a result by Ludwig \cite{Ludwig1999}.
However, it is desirable to have a  direct proof available for the approximation of $K$ without having to  pass through the results of \cite{Ludwig1999, LudwigSchuettWerner}.
\vskip 2mm
Here, we do exactly that  and construct  a well approximating  polytope $P_f$ that is obtained via a random construction. It is the  convex hull of  randomly chosen points with respect to a probability measure with density $f$.
\par
In fact, it is only via our new approach that different densities can be considered, not 
just the uniform distribution as in \cite{LudwigSchuettWerner}.
The optimal density is  related to the affine surface area (see below). With this optimal density, the dependence on $K$
in our result is optimal. Our result also gives the optimal dependence on the number of vertices.
\vskip 2mm
\noindent
Our main theorem reads as follows.
\begin{thm}\label{mainresult}
Let $K$ be a convex body in $\RR^n$, $n \ge 2$, that is $C^2_+$.  Let $f : \partial K \rightarrow \RR_+$ be a continuous and strictly positive function with 
\begin{align*}
\int\limits_{\partial K} f(x) \dint \mu_{\partial K}(x) = 1,
\end{align*}
where $\mu_{\partial K}$ is the surface measure on $\partial K$.	
Then, for sufficiently large $N$ there exists a polytope $P_f$ in $\RR^n$ having $N$ vertices such that
\begin{align}\label{mainresult1}
	\text{vol}_n(K \Delta P_f) \le a\, N^{-\frac{2}{n-1}}\, \int\limits_{\partial K} \frac{\kappa_K(x)^{1\over n-1}}{f(x)^{2\over n-1}} \dint \mu_{\partial K}(x), 
\end{align}
where $a\in (0,\infty)$ is an absolute constant. 
\end{thm}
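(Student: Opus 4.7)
My approach would mimic that of Ludwig, Sch\"utt and Werner \cite{LudwigSchuettWerner} for the unit ball, adapted to a general $C^2_+$ body $K$ and a nonuniform probability measure with density $f$ on $\partial K$. First, sample i.i.d.\ points $X_1,\ldots,X_N$ on $\partial K$ according to $f\,\dint\mu_{\partial K}$ (or, for technical convenience, use a Poisson point process of intensity $Nf$). These will serve as ``seed points''. The inscribed convex hull $C_N = \text{conv}(X_1,\ldots,X_N)$ alone would produce an extra factor of order $n$ in the final bound (as in the classical Sch\"utt--Werner asymptotic formula for random inscribed polytopes); to absorb this factor, I would shift each vertex outward along the outer unit normal by an optimal amount $s>0$, setting $Y_i = X_i + s\,\nu_K(X_i)$, and define $P_f = \text{conv}(Y_1,\ldots,Y_N)$. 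The resulting polytope has at most $N$ vertices (additional vertices can be added freely without increasing the symmetric difference) and is ``arbitrarily positioned'' with respect to $K$.

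The second step is to estimate the expected symmetric-difference volume. By Fubini,
\[
\EE\left[\vol_n(K \Delta P_f)\right] = \int_{\RR^n} \PP\left[y \in K\Delta P_f\right]\,dy,
\]
and the integrand is analysed in a tubular neighbourhood of $\partial K$ parametrised by $y = x + r\,\nu_K(x)$ with $x\in\partial K$ and $r\in\RR$. Since $K$ is $C^2_+$, a parabolic approximation of $\partial K$ near $x$ expresses the $F$-measure (where $F = f\,\mu_{\partial K}$) of a local cap of height $h$ above $x$ as $c\,f(x)\,\kappa_K(x)^{-1/2}\,h^{(n-1)/2}$ up to lower-order terms. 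Using the standard estimate $\PP[y\notin C_N] \le e^{-NF(\text{cap}_y)}$, together with the analogous bound controlling $\PP[y\in P_f\setminus K]$, and performing the change of variables $y\mapsto(x,r)$ (whose Jacobian is $1+O(r)$), reduces the computation to a one-dimensional integral in $r$ with $s$ as a parameter. Optimising $s$ locally, the resulting two-sided cap has volume smaller by a factor of order $n$ than the corresponding one-sided (inscribed) cap; this is exactly the gain needed to remove the dimension-dependent prefactor. Integrating over $\partial K$ produces the announced integral $\int\kappa_K(x)^{1/(n-1)} f(x)^{-2/(n-1)}\,\dint\mu_{\partial K}(x)$ with an absolute constant $a$, and the existence of a deterministic realisation of $P_f$ meeting the bound follows from the standard fact that some outcome of a random variable lies at or below its mean.

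The hardest part will be the local cap analysis in the second step: one must carry out the parabolic approximation, optimise the shift $s$ while tracking how both $f$ and $\kappa_K$ enter, and then check that the resulting local estimates integrate uniformly over $\partial K$. Handling the contribution from points $y$ not close to $\partial K$, where the relevant probabilities are exponentially small in $N$, is routine but must be verified. A secondary subtlety is that the optimal shift is really local---$s=s(x)$ calibrated to $\kappa_K(x)$ and $f(x)$---so one needs the construction to still yield a polytope with at most $N$ vertices while realising the two-sided cap gain simultaneously on every face; handling dependencies between neighbouring facets of $C_N$ is where the technical work concentrates.
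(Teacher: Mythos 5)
Your proposal takes a genuinely different route from the paper. The paper does not shift the sampled points outward; it keeps the polytope $P_N=[x_1,\ldots,x_N]$ with vertices on $\partial K$ and compares it to the \emph{shrunk} body $(1-c)K$, with the global scalar $c$ fixed by the exact matching condition $\vol_n((1-c)K)=\EE[\vol_n(P_N)]$. The payoff is a cancellation: since $P_N\subset K$,
\begin{align*}
\vol_n((1-c)K\,\Delta\, P_N)=\vol_n(K\setminus(1-c)K)-\vol_n(K\setminus P_N)+2\,\vol_n((1-c)K\cap P_N^c),
\end{align*}
and after taking expectations the first two terms cancel by the choice of $c$, leaving $\EE[\vol_n((1-c)K\,\Delta\, P_N)]=2\,\EE[\vol_n((1-c)K\cap P_N^c)]$. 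All subsequent work concentrates on this single one-sided quantity, handled via a facet decomposition, a Blaschke--Petkantschin-type formula of Z\"ahle, Reitzner's local parabolic estimates, and Miles' moment identity for random simplices. Your Fubini/tube-coordinate route is closer in spirit to B\'ar\'any-type cap analysis and would force you to estimate the inward and outward deviations separately and then tune the shift $s$ to balance them; it avoids the Blaschke--Petkantschin machinery but forgoes the cancellation.

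Your own closing remark points to where the genuine gap lies. The expected local cap depth at $x\in\partial K$ scales like $\bigl(N\,f(x)\,\kappa_K(x)^{-1/2}\bigr)^{-2/(n-1)}$, which varies with both $f$ and $\kappa_K$, so a single constant shift $s$ cannot be locally calibrated to realise the factor-$n$ gain everywhere, while a pointwise shift $s(x)$ places the vertices on a surface that is in general no longer convex and makes the behaviour of the convex hull $P_f$ hard to control. This is more than ``technical work'': it is the step that makes a direct two-sided cap argument difficult for general $K$ and $f$. The paper's choice of $(1-c)K$ with a single global $c$ sidesteps this entirely, because the expectation-matching cancellation removes the need for any local tuning of the shift. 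Two secondary points you would also have to address: the estimate $\PP[y\notin C_N]\le e^{-N F(\text{cap}_y)}$ is valid only for the Poissonised model and must be de-Poissonised, and pushing points from $\partial K$ to the outer parallel surface distorts the sampling density by the Jacobian factor $\prod_i(1+s\kappa_i(x))$, which has to be tracked. None of these is necessarily fatal, but your route leaves substantially more to verify, precisely because it lacks the structural cancellation the paper exploits.
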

\vskip 2mm
Before we turn to the proof of the theorem, we want to discuss the expression  on the right hand side of (\ref{mainresult1}) for various densities.
\vskip 3mm
\noindent
It has been shown in \cite{SchuettWerner2003} that the minimum on the right hand side of \eqref{mainresult1} is attained for the normalized affine surface area measure with density 
\begin{align*}
	f_{as}(x) := \frac{\kappa_K(x)^{1\over n+1}}{\int\limits_{\partial K} \kappa_K(x)^{1\over n+1} \dint \mu_{\partial K}(x)}. 
\end{align*}
In this case, the theorem yields that 
\begin{align}\label{aff}
\text{vol}_n(K \Delta P_f) \le a\, N^{-\frac{2}{n-1}}\, \text{as}(K)^{\frac{n+1}{n-1}},  
\end{align}
where 
\begin{align*}
\text{as}(K)= \text{as}_1(K)  : = \int\limits_{\partial K} \kappa_K(x)^{1\over n+1} \dint \mu_{\partial K}(x)
\end{align*}
is the affine surface area of $K$. This shows that choosing the vertices of   the random polytope according to curvature is optimal.
\newline
The affine surface area \cite{Blaschke, Leichtweiss, Lutwak1991, SW1}  is an important affine invariant from convex and differential geometry with applications in, e.g.,  approximation theory \cite{Boeroetzky2000/2, Reitzner, SchuettWerner2003}, the theory of valuations \cite{Bernig2011, Haberl2012, HaberlParapatitis, LudR1, Lud3, Schu}, affine curvature flows \cite{AN2, TW1, TW3}, and has recently been extended to spherical and hyperbolic space  \cite{BesauWerner1, BesauWerner2}. Its related affine isoperimetric inequality, which is equivalent to  the famous Blaschke Santal\'o inequality (see, e.g., \cite{Gardner1995, SchneiderBook}), says that 
$$
\left(\frac{\text{as}(K)}{\text{as}(\BBn)}\right)^\frac{n+1}{n-1} \leq  \frac{\text{vol}_n \left(K \right)}{\text{vol}_n \left(\BBn\right)}
$$
with equality if and only if $K$ is an ellipsoid. Applying this inequality to (\ref{aff}) yields the result corresponding to the one of (\ref{Kugel}) for the ball, i.e.,
\begin{align*}
\text{vol}_n(K \Delta P_f) \le a\, N^{-\frac{2}{n-1}}\, \text{vol}_n \left(K \right).
\end{align*}
\vskip 2mm
\noindent
 Let now $K$ be a convex body in $\RR^n$  such that  its centroid 
$\int\limits_K x \dint x/ \text{vol}_n(K)$
is at the origin. For $x\in \partial K$, we denote the corresponding outer unit normal by $N_{K}(x)$.
Put 
\begin{align*}
f_{\a,\b}(x) := \frac{\langle x,N_{K}(x) \rangle ^\a \kappa_K(x)^\b}{\int\limits_{\partial K} \langle x,N_{K}(x)\rangle^\a \kappa_K(x)^\b \dint \mu_{\partial K}(x)},
\end{align*}
where $\a,\b \in \RR$. For $p \in [-\infty, \infty]$,  $p \neq -n$, let
\begin{align*}
\text{as}_p(K) := \int\limits_{\partial K} \frac{\kappa_K(x)^{\frac{p}{n+p}}}{\langle x,N_{ K}(x)\rangle ^{\frac{n(p-1)}{n+p}}} \dint \mu_{\partial K}(x)
\end{align*}
be the  $p$-affine surface area of $K$.  Then, 
\begin{align*}
\text{vol}_n(K \Delta P_f) &\le a\, N^{-\frac{2}{n-1}}\, \int\limits_{\partial K} \frac{\kappa_K(x)^{\frac{1 - 2\b}{n - 1}}}{ \langle x,N_{K}(x) \rangle ^{\frac{2\a}{n - 1}}} \dint \mu_{\partial K}(x)\\
&\qquad \times \left(\,\int\limits_{\partial K}  \langle x,N_{K}(x) \rangle ^\a \kappa_K(x)^\b \dint \mu_{\partial K}(x)\right)^{\frac{2}{n-1}}. 
\end{align*}
The second integral is a $p$-affine surface area if and only if 
\begin{align*}
\a = - \frac{n(p-1)}{n+p} \quad \text{and} \quad \b = \frac{p}{n+p}.
\end{align*}
In this case, we obtain 
\begin{align*}
\text{vol}_n(K \Delta P_f) \le a\, N^{-\frac{2}{n-1}}\, \text{as}_p(K)^{\frac{2}{n-1}}\, \text{as}_q(K),
\end{align*}
where  $q = \frac{n-p}{n+p-2}$.
\newline
The $p$-affine surface area, an extension of the classical affine surface area, was introduced by Lutwak for 
$p>1$ in \cite{Lutwak1996} and has been extended to all other $p$ in \cite{SchuettWerner2004}.
 It is central to the rapidly developing $L_p$ Brunn Minkowski theory, e.g.,  \cite{BoeroetzkyLutwakYangZhang, CaglarWerner2014,  HuangLutwakYangZhang, MW2,  SA2, WY1}.
\vskip 2mm
\noindent
The third measure of interest is the surface measure itself given by the constant density
\begin{align*}
	f_{sm}(x) := \frac{1}{\text{vol}_{n-1}(\partial K)},
\end{align*}
where $\vol_{n-1}(\cdot)$ describes the $(n-1)$-dimensional Hausdorff measure of the argument set.
Then,
\begin{align*}
	\text{vol}_n(K \Delta P_f) \le a\, N^{-\frac{2}{n-1}}\, \text{vol}_{n-1}(\partial K)^{\frac{2}{n-1}}  \int\limits_{\partial K} \kappa_K(x)^{1\over n-1} \dint \mu_{\partial K}(x).  
\end{align*}
\vskip 3mm
\noindent
The remaining paper is organized as follows.  In Section $2$ we present   results that are needed for the proof of the main theorem. 
Its proof is given in  Section $3$. 

\section{Preliminaries}\label{sec:Preliminaries}

We start this section by introducing some more necessary notation. For $u \in \SSn$ and $h \geq 0$, let $H:=H(u,h)$ be the hyperplane orthogonal to $u$ and at distance $h$ from the origin. 
Let $\PP_f$ be the probability measure on $\partial K$ given by 
\begin{align*}
\dint \PP_f = f(x) \dint \mu_{\partial K}(x).
\end{align*}
Let $H \cap K \neq \emptyset$. Then,  $\PP_{f_{\partial K \cap H}}$ is the probability measure  on $\partial K \cap H$ given by 
\begin{align*}
\dint \PP_{f_{\partial K \cap H}} = \frac{f(x) \dint \mu_{\partial K \cap H}}{\int\limits_{\partial K \cap H} f(x) \dint\mu_{\partial K \cap H} }.
\end{align*}
For points $x_1, \ldots, x_i \in \RR^n$, $i \in \NN$, we denote by $[x_1,\ldots,x_i]$ the convex hull of the underlying point set. Moreover, let $\omega_n$ denote the $(n-1)$-dimensional Hausdorff measure of the unit sphere $\SSn$.\\

In order to prove the main theorem, we need the following results. The first two were proved in \cite{SchuettWerner2003}. 

\begin{thm}[\cite{SchuettWerner2003}]\label{lem:SchüttWerner}
	Denote by $\EE[f,N]$ the expected volume of the convex hull of $N$ points chosen randomly on $\partial K$ with respect to $\PP_f$. Then, 
	\begin{align*}
		\lim\limits_{N\rightarrow \infty} \frac{\text{vol}_n(K) - \EE[f,N]}{N^{-\frac{2}{n-1}}} = \frac{(n-1)^{\frac{n+1}{n-1}} \Gamma\left(n + 1 + \frac{2}{n-1}\right)}{2 (n+1)!\, \omega_{n-1}^{\frac{2}{n-1}}}\,  \int\limits_{\partial K} \frac{\kappa_K(x)^{1\over n-1}}{f(x)^{2\over n-1}} \dint \mu_{\partial K}(x).
	\end{align*} 
\end{thm}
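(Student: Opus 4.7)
\vspace{2mm}
The plan is to carry out the classical asymptotic analysis of random convex hull volumes (going back to R\'enyi--Sulanke for $n=2$, and generalised by B\'ar\'any, Schneider, Sch\"utt and others): express the expected missed volume as a point-wise integral or as a sum over facets, localise near $\partial K$ via the $C^2_+$ structure, and identify the limit by a scaling change of variables.

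Writing $P_N := [X_1,\ldots,X_N]$, Fubini gives
\[
\vol_n(K)-\EE[f,N] = \int_K \PP(z \notin P_N)\,\dint z.
\]
Since $K$ is $C^2_+$, $\partial K$ has positive reach, and for $z$ in a tubular neighbourhood I write $z = y - t N_K(y)$ uniquely; the Jacobian of this change of variables is $\prod_{i=1}^{n-1}(1 - t \kappa_i(y)) = 1 + O(t)$ with $\kappa_i(y)$ the principal curvatures. The contribution from $\{z \in K : \mathrm{dist}(z,\partial K) \ge \d\}$ decays as $O(e^{-cN})$ and is negligible at scale $N^{-2/(n-1)}$. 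The $C^2_+$ assumption parametrises $\partial K$ near $y$ as the graph of the second fundamental form over $T_y \partial K$; hence the cap
\[
C(y,t) := \{x \in \partial K : \langle x - y, N_K(y)\rangle > -t\}
\]
has $\PP_f$-measure $\frac{2^{(n-1)/2}\,\omega_{n-1}}{n-1}\, f(y)\, \kappa_K(y)^{-1/2}\, t^{(n-1)/2}(1+o(1))$ as $t \to 0$, by a direct Taylor computation.

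The crux is the asymptotic of $\PP(z\notin P_N)$ on the scale $t \asymp N^{-2/(n-1)}$. The cleanest route is a facet decomposition via the Blaschke--Petkantschin identity: every facet of $P_N$ is spanned by $n$ of the $X_i$, with the remaining $N-n$ points lying on one side of the spanned hyperplane, yielding
\[
\EE[\vol_n(K \setminus P_N)] = \binom{N}{n}\int \vol_n(C(x_1,\ldots,x_n))\bigl(1-\PP_f(C(x_1,\ldots,x_n))\bigr)^{N-n}\prod_{i=1}^n f(x_i)\,\dint \mu(x_i),
\]
where $C(x_1,\ldots,x_n)$ denotes the cap of $K$ cut off on the outward side of the affine hyperplane spanned by $x_1,\ldots,x_n$ (with the usual abuse identifying this cap with its intersection with $\partial K$ when computing $\PP_f$; the contribution from the larger cap on the other side is lower order and discarded). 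A tangent-chart change of coordinates at a base point $y\in\partial K$, together with the affine Blaschke--Petkantschin formula that decomposes $\prod\dint\mu(x_i)$ as an $(n-1)$-simplex volume factor times the affine hyperplane measure, and the scaling $t = s N^{-2/(n-1)}$, brings the integrand to an $N$-independent form. Dominated convergence then delivers a limit of the form $C_n \int_{\partial K} f(y)^{-2/(n-1)}\kappa_K(y)^{1/(n-1)}\,\dint\mu_{\partial K}(y)$; an inner Beta-type integral $\int_0^1 u^a (1-u)^{N-n}\,\dint u \sim \Gamma(a+1)N^{-a-1}$ produces the factor $\Gamma(n+1+\tfrac{2}{n-1})$, and collecting the combinatorial prefactor $\binom{N}{n}/(n+1)!$ and the cap-measure constants $\omega_{n-1}^{2/(n-1)}$ and $(n-1)^{(n+1)/(n-1)}$ assembles the full constant.

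The main obstacle is that the naive pointwise formula $\PP(z\notin P_N) \sim (1-\PP_f(C(y,t)))^N$ is incorrect in general: already for $n=2$ the correct asymptotic involves a modified Bessel function arising from the joint distribution of the two nearest-neighbour arcs on either side of $y$. A direct pointwise attack therefore requires a delicate joint order-statistics analysis on the tangent chart $T_y\partial K$. The Blaschke--Petkantschin decomposition sidesteps this, but one still has to establish a uniform integrable majorant for the integrand on the scale $t \asymp N^{-2/(n-1)}$ to justify interchanging the limit and the integral, together with uniform control on the error in the quadratic approximation of $\partial K$ as $y$ ranges over the whole boundary. These uniform estimates constitute the main technical work.
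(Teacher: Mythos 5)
This theorem is quoted without proof from Sch\"utt and Werner (2003); the paper uses it as a black box to fix the dilation parameter $c$ in \eqref{choice:c}, so there is no internal proof in the paper to compare against. That said, your outline captures the right strategy, and it is essentially the same toolkit the paper deploys in Section~3 for its own main theorem: a facet decomposition, the Blaschke--Petkantschin identity (Theorem~\ref{lem:Zähle}), the osculating-paraboloid parametrisation of $\partial K$ (Lemma~\ref{lem:Reitzner}), a change of variables from the height $z$ to the cap probability $s$ that produces a Beta integral, and Miles' moment formula (Lemma~\ref{lem:Miles}). Your remark that the naive pointwise asymptotic $\PP(z\notin P_N)\sim(1-\PP_f(C(y,t)))^N$ fails, and that the facet route sidesteps it, is correct and is the essential reason this route is taken.

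One substantive caveat: the facet identity as you wrote it, with the full cap volume $\vol_n(K\cap H^-)$ as the integrand factor, over-counts $K\setminus P_N$, because the caps cut off by distinct facets overlap. The exact decomposition used in the proof of Lemma~\ref{Schritt1} replaces the cap by the \emph{cone cap} $K\cap H^-\cap\cone(x_1,\ldots,x_n)$, after conditioning on $0\in P_N$; the cones over the facets from the origin then tile $\RR^n$, so these regions genuinely partition $K\setminus P_N$, and the complementary event $0\notin P_N$ is disposed of by Lemma~\ref{lem:SchüttWerner2}. You should adopt this form; otherwise you would have to argue separately that the over-count is $o(N^{-2/(n-1)})$, which is not automatic. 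Finally, the two uniformity issues you flag --- a dominating function for the dominated convergence step, and uniform control of the second-order boundary approximation as the base point ranges over $\partial K$ --- are indeed the main technical content, and that is precisely where Lemmas~\ref{lem:Reitzner} and~\ref{lem:Reitzner2} (drawn from Reitzner's work) do their work in the paper.
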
 

\begin{lem}[\cite{SchuettWerner2003}]\label{lem:SchüttWerner2}
	Let $\sigma = (\sigma_i)_{1\le i \le n}$ be a sequence of signs, that is $\sigma_i \in \{-1,1\}$, $1\le i\le n$.  We set 
	\begin{align*}
		K^\sigma := \{x = (x_1,\ldots,x_n) \in K : \text{sign}(x_i) = \sigma_i, 1\le i\le n \}. 
	\end{align*}
	Then, 
	\begin{align*}
		\PP_f^N(\{0 \notin [x_1,\ldots,x_N]\}) \le 2^n\left(1 - \min\limits_{\sigma} \int\limits_{\partial K^\sigma} f(x) \dint \mu_{\partial K}\right)^N, 
	\end{align*}
	where $\PP_f^N$ indicates that we choose $N$ points on $\partial K$ with respect to $\PP_f$.
\end{lem}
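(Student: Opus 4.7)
The plan is to reduce the event $\{0 \notin [x_1,\ldots,x_N]\}$ to a union over the $2^n$ sign-pattern orthants and then apply a simple union bound, using the i.i.d.\ structure of the sample.

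First I would carry out a geometric reduction via strict separation. Since $[x_1,\ldots,x_N]$ is compact, the event $\{0 \notin [x_1,\ldots,x_N]\}$ is equivalent to the existence of a vector $u \in \SSn$ with $\langle x_i,u\rangle > 0$ for every $1 \le i \le N$. Setting $\sigma_i := -\,\mathrm{sign}(u_i)$, any $y \in K^\sigma$ satisfies $y_i u_i \le 0$ for each coordinate and therefore $\langle y,u\rangle \le 0$; in particular no $x_i$ can lie in $\partial K \cap K^\sigma$. This gives the set-theoretic inclusion
$$\{0 \notin [x_1,\ldots,x_N]\} \;\subseteq\; \bigcup_{\sigma \in \{-1,1\}^n} \bigl\{ x_i \notin \partial K^\sigma \text{ for all } 1\le i \le N\bigr\}.$$

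Next I would do the probabilistic bookkeeping. Since the $x_i$ are independent samples from $\PP_f$, for each fixed $\sigma$
$$\PP_f^N\bigl(\{x_i \notin \partial K^\sigma \text{ for all } i\}\bigr) = (1-p_\sigma)^N, \qquad p_\sigma := \int\limits_{\partial K^\sigma} f(x)\,\dint\mu_{\partial K}(x).$$
A union bound over the $2^n$ sign patterns, followed by replacing each $p_\sigma$ by $\min_\sigma p_\sigma$, immediately yields
$$\PP_f^N(\{0 \notin [x_1,\ldots,x_N]\}) \le 2^n\Bigl(1 - \min\limits_\sigma p_\sigma\Bigr)^N,$$
which is exactly the asserted inequality.

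The only delicate point is what to do when the separating normal $u$ happens to have a zero coordinate, in which case $\mathrm{sign}(u_i)$ is ambiguous and the orthant assignment above is ill-defined. I would handle this by a standard perturbation argument: strict separation of the compact set $[x_1,\ldots,x_N]$ from the origin is an open condition in $u$, so one can replace $u$ by a nearby unit vector with no vanishing coordinates while retaining $\langle x_i,u\rangle > 0$ for all $i$. With this minor adjustment the assignment $\sigma_i = -\mathrm{sign}(u_i)$ is well defined and the above reduction goes through. I do not anticipate any other genuine obstacle beyond this bookkeeping.
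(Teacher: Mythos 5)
Your proof is correct; the paper does not prove this lemma itself (it is quoted from \cite{SchuettWerner2003}), and your strict-separation-plus-union-bound argument is exactly the standard Wendel-type reasoning behind the original proof: a separating normal $u$ forces the entire orthant piece $\partial K^\sigma$ with $\sigma_i=-\mathrm{sign}(u_i)$ to be missed by all $N$ i.i.d.\ points, and the $2^n$ factor is the union bound over sign patterns. Your perturbation remark correctly disposes of the only delicate point (vanishing coordinates of $u$), so there is no gap.
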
 

We also need the following `Blaschke-Petkanchin-type' formula that appears as a special case of a result in \cite{Zähle}; an alternative and simpler proof for this version is also given in \cite{Reitzner}.

\begin{thm}[\cite{Zähle}]\label{lem:Zähle}
	
  Let $g(x_1,\ldots,x_n)$ be a continuous, non-negative function. Then, 
	\begin{align*}
		&\int\limits_{\partial K} \cdots \int\limits_{\partial K} g(x_1,\ldots,x_n) \dint \PP_f(x_1) \cdots \dint \PP_f (x_n)\\
		& \qquad = (n-1)! \int\limits_{\SSn} \int\limits_0^\infty \int\limits_{\partial K \cap H} \cdots \int\limits_{\partial K \cap H} g(x_1,\ldots,x_n) \text{vol}_{n-1}([x_1,\ldots,x_n])\\
		&\qquad \qquad \times \prod_{j=1}^{n} l_{H}(x_j) \dint \PP_{f_{\partial K \cap H}}(x_1) \cdots \dint \PP_{f_{\partial K \cap H}} (x_n) \dint h \dint \mu_{\SSn}(u)
	\end{align*}
	with
	\begin{align*}
		l_{H}(x_j) := \left\| \text{proj}_{H} N_{K}(x_j)\right\|^{-1}, 
	\end{align*}
	where $\text{proj}_{H}$ is the orthogonal projection onto the hyperplane $H$. 
\end{thm}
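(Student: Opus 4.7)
The plan is to view the stated identity as a slicing formula expressing integration against $n$ independent copies of $\PP_f$ on $\partial K$ in two stages: first pick the affine hyperplane $H = H(u,h)$ spanned by the $n$ points (parametrised by $(u,h)\in\SSn\times(0,\infty)$), then sample the points from $\partial K\cap H$. To disentangle the density $f$, I set $\tilde g(x_1,\dots,x_n):=g(x_1,\dots,x_n)\prod_j f(x_j)$. Since $d\PP_f=f\,d\mu_{\partial K}$ and $d\PP_{f_{\partial K\cap H}}\propto f\,d\mu_{\partial K\cap H}$, the claim reduces (up to the normalising constants $\int_{\partial K\cap H}f\,d\mu_{\partial K\cap H}$, reinstated at the end) to the unweighted identity
\begin{equation*}
\int_{(\partial K)^n}\!\!\tilde g\prod_j d\mu_{\partial K}(x_j) = (n-1)!\int_{\SSn}\!\int_0^\infty\!\int_{(\partial K\cap H)^n}\!\!\tilde g\,\vol_{n-1}([x_1,\dots,x_n])\prod_j l_H(x_j)\,d\mu_{\partial K\cap H}(x_j)\,dh\,d\mu_{\SSn}(u).
\end{equation*}

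The unweighted identity I would derive from the classical affine Blaschke--Petkanchin formula in $\RR^n$: for continuous $G\ge 0$ on $(\RR^n)^n$,
\begin{equation*}
\int_{(\RR^n)^n}\!\!G\,dx_1\cdots dx_n = (n-1)!\int_{\SSn}\!\int_0^\infty\!\int_{H^n}\!G\,\vol_{n-1}([x_1,\dots,x_n])\prod_j d\lambda_H(x_j)\,dh\,d\mu_{\SSn}(u),
\end{equation*}
where $\lambda_H$ is $(n-1)$-dimensional Lebesgue measure on $H$. Almost every ordered $n$-tuple in $\RR^n$ is affinely independent and spans a unique hyperplane, and the factor $(n-1)!\,\vol_{n-1}([x_1,\dots,x_n])$ is the Jacobian of the parametrisation of ordered tuples by (hyperplane, points within the hyperplane). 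To descend to $\partial K$, I localise $G$ to the $\varepsilon$-tube around $\partial K$: using tube coordinates $x_j = y_j + t_j N_K(y_j)$, set $G(x_1,\dots,x_n) = \tilde g(y_1,\dots,y_n)\prod_j\frac{1}{2\varepsilon}\mathbf{1}_{\{|t_j|\le\varepsilon\}}$ and let $\varepsilon\downarrow 0$. The LHS converges to $\int_{(\partial K)^n}\tilde g\prod d\mu_{\partial K}(y_j)$. On the RHS, the slice of the $\varepsilon$-tube by $H$ is, near each $x_j\in\partial K\cap H$, a band inside $H$ of thickness $2\varepsilon/\|\operatorname{proj}_H N_K(x_j)\|$ around $\partial K\cap H$, because the outward unit normal to $\partial K\cap H$ at $x_j$ computed inside $H$ is $\operatorname{proj}_H N_K(x_j)/\|\operatorname{proj}_H N_K(x_j)\|$. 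Hence each factor $\frac{1}{2\varepsilon}\mathbf{1}_{\{|t_j|\le\varepsilon\}}\,d\lambda_H(x_j)$ converges weakly to $l_H(x_j)\,d\mu_{\partial K\cap H}(x_j)$, and the $n$ contributions combine into $\prod_j l_H(x_j)$. Reinserting $\tilde g = g\prod_j f(x_j)$ and converting $f\,d\mu_{\partial K\cap H}$ into $\PP_{f_{\partial K\cap H}}$ with the appropriate normalising constant yields the statement of the theorem.

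The main technical point is the angle-correction computation above: verifying that the slice-thickness ratio at $x_j$ equals $l_H(x_j) = \|\operatorname{proj}_H N_K(x_j)\|^{-1}$. Writing $u = \alpha N_K(x_j)+\beta e$, where $e$ is the unit vector in $T_{x_j}\partial K$ perpendicular to $T_{x_j}(\partial K\cap H)$, a direct computation yields $\|\operatorname{proj}_H N_K(x_j)\| = |\beta|$, and the band-thickness relation follows by differentiating the constraint $\langle y + tN_K(y),u\rangle = h$ at $y=x_j$. Everything else is bookkeeping: the $(n-1)!\,\vol_{n-1}([x_1,\dots,x_n])$ factor is the classical Blaschke--Petkanchin Jacobian in $\RR^n$, the choice $h>0$ counts each hyperplane exactly once, and the $C^2_+$-hypothesis on $K$ guarantees that the tube parametrisation is well defined and that the measure-zero sets (affinely dependent $n$-tuples, hyperplanes tangent to $\partial K$) can be neglected.
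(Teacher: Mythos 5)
The paper offers no proof of this statement: it is quoted as a special case of Z\"ahle's kinematic formula, with a pointer to Reitzner's paper for ``an alternative and simpler proof''. Your route --- the classical affine Blaschke--Petkantschin formula for $n$ points and hyperplanes in $\RR^n$, followed by a tube/coarea limit that pushes the Lebesgue integrals down to $\partial K$ and to $\partial K\cap H$, with $l_H(x_j)=\left\|\text{proj}_H N_K(x_j)\right\|^{-1}$ arising as the slice-thickness correction --- is the standard derivation and is exactly in the spirit of the proof in Reitzner's paper, so the strategy is sound and your angle computation for $l_H$ is correct. Two points would need tightening in a full write-up: the constant in the solid Blaschke--Petkantschin formula must be matched against the parametrisation of hyperplanes by $\SSn\times(0,\infty)$ (each hyperplane counted once, no factor $1/2$), and the limit $\varepsilon\downarrow 0$ has to be interchanged with the outer integral over $(u,h)$. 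Near hyperplanes tangent to $\partial K$ the factor $l_H$ blows up like $z^{-1/2}$ (cf.\ \eqref{radial2}); for a $C^2_+$ body this singularity is integrable, but one needs a domination argument there, not merely the remark that tangent hyperplanes form a null set.

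One substantive discrepancy: what your argument actually proves is the identity with the unnormalised measures $f(x_j)\,\dint\mu_{\partial K\cap H}(x_j)$ on the right-hand side. Converting these into the probability measures $\PP_{f_{\partial K\cap H}}$ would introduce the $H$-dependent factor $\left(\,\int_{\partial K\cap H}f\,\dint\mu_{\partial K\cap H}\right)^{n}$ inside the $\dint h\,\dint\mu_{\SSn}(u)$ integral; it cannot simply be ``reinstated at the end'', and without it the displayed identity fails already for $g\equiv 1$. So your proof does not, and cannot, yield the theorem exactly as printed. This is a defect of the printed statement rather than of your argument: in the proof of Lemma~\ref{Schritt2} the paper itself rewrites $\dint\PP_{f_{\partial K\cap H}}(x_j)$ in polar coordinates as $f(r(v_j)v_j)\,r(v_j)^{n-2}\langle v_j,N_{K\cap H}(r(v_j)v_j)\rangle^{-1}\dint\mu_{\SS^{n-2}}(v_j)$, i.e.\ as the unnormalised $f\,\dint\mu_{\partial K\cap H}$, so the version you prove is the one actually used downstream. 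State and prove that version, and delete the final ``conversion'' sentence.
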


The next result is a special case of \cite[equation (29)]{Miles}. 

\begin{lem}[\cite{Miles}]\label{lem:Miles}
	It holds that
	\begin{align*}
		\int\limits_{\SS^{n-2}} \cdots \int\limits_{\SS^{n-2}} (\text{vol}_{n-1}([x_1,\ldots,x_n]))^2 \dint \mu_{\SS^{n-2}}(x_1)\cdots 
		\dint \mu_{\SS^{n-2}}(x_n) = \frac{n  \  \omega_{n-1}^n}{(n-1)!\, (n-1)^{n-1}}\ .
	\end{align*}
\end{lem}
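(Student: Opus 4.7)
I would reduce the $n$-fold integral to a single squared-determinant integral over $n-1$ copies of the sphere, then evaluate the latter via a Gaussian representation. The starting point is the determinantal formula for simplex volume: for $x_1,\dots,x_n \in \RR^{n-1}$,
\[
(n-1)!\,\vol_{n-1}([x_1,\dots,x_n]) \;=\; \bigl|\det A\bigr|, \qquad A := \begin{pmatrix} x_1 & \cdots & x_n \\ 1 & \cdots & 1 \end{pmatrix} \in \RR^{n\times n},
\]
so that expansion of $\det A$ along the bottom row gives $\det A = \sum_{k=1}^n (-1)^{n+k} D_k$ with $D_k := \det(x_1,\dots,\hat x_k,\dots, x_n)$. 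Squaring yields $((n-1)!)^2\,\vol_{n-1}^2 = \sum_k D_k^2 + 2\sum_{k<l}(-1)^{k+l} D_k D_l$.

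\textbf{Cross terms vanish; reduction to a single integral.} For $k \neq l$, say $(k,l)=(1,2)$: $D_1$ depends linearly on $x_2$ by column-multilinearity of the determinant, while $D_2$ does not involve $x_2$; since $\int_{\SS^{n-2}} x\,\dint\mu_{\SS^{n-2}}(x) = 0$ by antipodal symmetry, Fubini kills the cross term. By exchangeability of $x_1,\dots,x_n$, the $n$ diagonal integrals $\int D_k^2\,\dint\mu_{\SS^{n-2}}(x_1)\cdots\dint\mu_{\SS^{n-2}}(x_n)$ all coincide, and since $D_1^2$ does not involve $x_1$, integrating out $x_1$ produces a factor $\omega_{n-1}$. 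The target integral therefore equals
\[
\frac{n\,\omega_{n-1}}{((n-1)!)^2}\,I, \quad\text{where}\quad I := \int_{\SS^{n-2}}\!\!\cdots\!\!\int_{\SS^{n-2}} \det(y_1,\dots,y_{n-1})^2 \,\dint\mu_{\SS^{n-2}}(y_1)\cdots\dint\mu_{\SS^{n-2}}(y_{n-1}).
\]

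\textbf{Evaluating $I$ via Gaussians.} Let $Z_1,\dots,Z_{n-1}$ be i.i.d.\ standard Gaussian vectors in $\RR^{n-1}$. Then $Y_i := Z_i/|Z_i|$ is uniform on $\SS^{n-2}$, the angular parts $(Y_i)$ are independent of the radial parts $(|Z_i|)$, and $\det(Z_1,\dots,Z_{n-1}) = \det(Y_1,\dots,Y_{n-1})\prod_i |Z_i|$. Taking expected squares and using the independence,
\[
\EE[\det(Z)^2] \;=\; \EE[\det(Y)^2]\,\prod_{i=1}^{n-1}\EE[|Z_i|^2] \;=\; \EE[\det(Y)^2]\,(n-1)^{n-1}.
\]
A Gram--Schmidt decomposition of the columns gives $\det(Z)^2 = \prod_i |h_i|^2$, where $h_i$ is the component of $Z_i$ orthogonal to $\mathrm{span}(Z_1,\dots,Z_{i-1})$; conditionally, $|h_i|^2 \sim \chi^2_{n-i}$, so $\EE[\det(Z)^2] = \prod_{i=1}^{n-1}(n-i) = (n-1)!$. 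Hence $\EE[\det(Y)^2] = (n-1)!/(n-1)^{n-1}$ and $I = \omega_{n-1}^{n-1}\,(n-1)!/(n-1)^{n-1}$, so the full integral equals $n\,\omega_{n-1}^n / \bigl((n-1)!\,(n-1)^{n-1}\bigr)$, as asserted. The main obstacle is the Gaussian identity $\EE[\det(Z)^2] = (n-1)!$, but this is classical; everything else reduces to column-multilinearity of determinants plus the symmetry $\int_{\SS^{n-2}} x\,\dint\mu_{\SS^{n-2}}(x) = 0$.
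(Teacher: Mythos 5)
Your argument is correct: the determinant formula $(n-1)!\,\vol_{n-1}([x_1,\dots,x_n])=|\det A|$, the vanishing of the cross terms after integrating out one variable (using $\int_{\SS^{n-2}}x\,\dint\mu_{\SS^{n-2}}(x)=0$), the reduction to $I=\int\det(y_1,\dots,y_{n-1})^2\,\dint\mu^{\otimes(n-1)}$, and the Gaussian evaluation $\EE[\det(Z)^2]=(n-1)!$ via the base-times-height (Gram--Schmidt) decomposition with $|h_i|^2\sim\chi^2_{n-i}$ all check out, and the bookkeeping with the total mass $\omega_{n-1}$ of $\SS^{n-2}$ gives exactly $n\,\omega_{n-1}^n/((n-1)!\,(n-1)^{n-1})$. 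The paper itself offers no proof: it simply cites the identity as a special case of equation (29) of Miles, whose general formula gives all moments of the volume of a simplex with vertices uniform on a sphere and is derived with integral-geometric and Gamma-function machinery. Your route is therefore genuinely different and, for the second moment, more elementary and self-contained; what you give up is generality --- the cofactor-plus-Gaussian trick exploits that the square of the volume is a polynomial in the coordinates, so it does not extend to odd or fractional moments the way Miles's formula does, but for the purposes of this lemma that costs nothing.
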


\section{Proof of the main result}

We now turn to the proof of  our main theorem. As in \cite{LudwigSchuettWerner} we will obtain the approximating polytope in a probabilistic way. To be more precise, we consider a convex body that is slightly bigger than the body $K$ and then choose $N$ points randomly on the boundary of the bigger body and take the convex hull of these points. Such a random polytope exists with high probability. 
\par
Without loss of generality we can assume that the origin, denoted by $0$, is in the interior of $K$, namely at the center of gravity of $K$.
Since our density functions live on the boundary of $K$, we will choose the random points on $\partial K$  and approximate a slightly smaller body, say $(1-c)K$, where $c := c_{n,N}$ depends on the  dimension $n$ and the number of points $N$ and has to be carefully chosen. 
In fact, we choose $c$ such that 
\begin{equation}\label{choice:c}
\EE[f,N] = \vol_{n}((1-c) K) = (1-c)^n \vol_n(K).
\end{equation}
By Theorem \ref{lem:SchüttWerner}, we get for sufficiently large $N$, 
$$
\text{vol}_n(K) - \EE[f,N]  \  \sim  \ N^{-\frac{2}{n-1}}\,  \frac{(n-1)^{\frac{n+1}{n-1}} \Gamma\left(n + 1 + \frac{2}{n-1}\right)}{2 (n+1)!\, \omega_{n-1}^{\frac{2}{n-1}}} \,  \int\limits_{\partial K} \frac{\kappa_K(x)^{1\over n-1}}{f(x)^{2\over n-1}} \dint \mu_{\partial K}(x), 
$$
where for two functions $g_1(x)$ and $g_2(x)$ the relation $g_1(x) \sim g_2(x)$ means that 
\begin{align*}
\lim\limits_{x\rightarrow \infty} \frac{g_1(x)}{g_2(x)} = 1.
\end{align*} 
Hence, with the choice (\ref{choice:c}) of $c$,
$$
\vol_n(K) - (1-c)^n \vol_n(K) \  \sim  \ N^{-\frac{2}{n-1}}\, \frac{(n-1)^{\frac{n+1}{n-1}} \Gamma\left(n + 1 + \frac{2}{n-1}\right)}{2 (n+1)!\, \omega_{n-1}^{\frac{2}{n-1}}} \,  \int\limits_{\partial K} \frac{\kappa_K(x)^{1\over n-1}}{f(x)^{2\over n-1}} \dint \mu_{\partial K}(x),
$$
as $N \rightarrow \infty$. This leads to 
\begin{align}\label{c}
c \sim N^{-\frac{2}{n-1}}\, \frac{(n-1)^{\frac{n+1}{n-1}} \Gamma\left(n + 1 + \frac{2}{n-1}\right)}{2 (n+1)!\, \omega_{n-1}^{\frac{2}{n-1}}}\, \frac{1}{n \text{vol}_n(K)}\, \int\limits_{\partial K} \frac{\kappa_K(x)^{1\over n-1}}{f(x)^{2\over n-1}} \dint \mu_{\partial K}(x),
\end{align}
as $N \rightarrow \infty$.
In particular, for sufficiently large $N$ we get that 
\begin{align}\label{AbschätzungfürC}
c \ge \left(1-\frac{1}{n}\right) N^{-\frac{2}{n-1}}\, \frac{(n-1)^{\frac{n+1}{n-1}} \Gamma\left(n + 1 + \frac{2}{n-1}\right)}{2 (n+1)!\, \omega_{n-1}^{\frac{2}{n-1}}}\, \frac{1}{n \text{vol}_n(K)}\, \int\limits_{\partial K} \frac{\kappa_K(x)^{1\over n-1}}{f(x)^{2\over n-1}} \dint \mu_{\partial K}(x).
\end{align}
\vskip 3mm
\noindent
In what follows, we calculate the expected volume difference $\EE[\text{vol}_n((1-c)K \Delta P_N)]$ between $(1-c)K$ and a random polytope $P_N : =[x_1,\ldots,x_N]$ whose vertices are randomly chosen from the boundary of $K$ according to the probability measure $\PP_f$. 
Please note that random polytopes are simplicial with probability 1. We split the proof of the main theorem into several lemmas.\\
Let us recall that for fixed $u\in \SSn$ and $h\ge 0$, we denote by $H:= H(u,h)$ the unique hyperplane orthogonal to $u$ and at distance $h$ from the origin. Let $H^+$ be the corresponding half space containing the origin and put
\begin{align}\label{H+}
	\PP_f(\partial K \cap H^+) := \int\limits_{\partial K \cap H^+} f(x) \dint \mu_{\partial K}. 
\end{align} 
For fixed $u\in \SSn$ and sufficiently large $N$, let $\epsilon > 0$ be  such that $c h_K(u) < \epsilon \le h_K(u)/n$, where $h_K(u) := \max\limits_{x\in K} \langle x,u \rangle$ is the support function of $K$ in direction $u$.\\
In what follows, $a\in (0,\infty)$ will always be an absolute constant that may changes from line to line. 

\begin{lem}\label{Schritt1}
For sufficiently large $N$, for all $ \epsilon  > c h_K(u)$ sufficiently small, 
\begin{align*}
&\EE[\text{vol}_n((1-c)K \Delta P_N)]\\
&\qquad \le a\, \binom{N}{n}\, (n-1)! \int\limits_{\SSn} \int\limits_{h_K(u) - \epsilon}^{h_K(u)}  \left(\PP_f(\partial K \cap H^+)\right)^{N-n} \max \{0, ((1-c)h_K(u) - h)\}\\
&\qquad \qquad \times \int\limits_{\partial K \cap H} \cdots \int\limits_{\partial K \cap H} (\text{vol}_{n-1}([x_1,\ldots,x_n]))^2 \prod_{j=1}^{n} l_{H}(x_j)\\
&\qquad \qquad \times \dint \PP_{f_{\partial K \cap H}}(x_1) \cdots \dint \PP_{f_{\partial K \cap H}} (x_n) \dint h \dint \mu_{\SSn}(u).
\end{align*}
\end{lem}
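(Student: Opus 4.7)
\emph{Reduction via the mean-volume identity.} The choice~\eqref{choice:c} of $c$ was made precisely to force $\EE[\vol_n(P_N)]=\vol_n((1-c)K)$. Combining this with $\vol_n(A\Delta B)=\vol_n(A)+\vol_n(B)-2\vol_n(A\cap B)$ applied to $A=(1-c)K$, $B=P_N$ gives
\begin{equation*}
\EE[\vol_n((1-c)K\Delta P_N)] \;=\; 2\,\EE[\vol_n((1-c)K\setminus P_N)],
\end{equation*}
so it suffices to control the expected one-sided volume.

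\emph{Facet decomposition and facet probability.} Almost surely every facet of $P_N$ is an $(n{-}1)$-simplex $[x_{i_1},\dots,x_{i_n}]$ whose affine hull $H_S$ has all remaining $N-n$ points on one side; write $H_S^+$ for the half-space containing the origin. Decomposing $(1-c)K\setminus P_N$ according to the facet it lies beyond yields
\begin{equation*}
\vol_n((1-c)K\setminus P_N)\;\le\;\sum_{|S|=n}\vol_n\bigl((1-c)K\cap H_S^-\bigr)\,\mathbf{1}_{\{S\text{ is a facet, outward side }H_S^-\}}.
\end{equation*}
Taking expectations, exchangeability of $x_1,\dots,x_N$ turns the sum into $\binom{N}{n}$ times the contribution of $S=\{1,\dots,n\}$; conditional on $x_1,\dots,x_n$, the facet event (with outward side $H^-$) has probability exactly $(\PP_f(\partial K\cap H^+))^{N-n}$. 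The opposite orientation contributes an additional factor $(\PP_f(\partial K\cap H^-))^{N-n}$, which is exponentially small for $h$ near $h_K(u)$ and is absorbed into the absolute constant.

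\emph{Cap estimate and Blaschke--Petkanchin.} The geometric core is the pyramid-type bound
\begin{equation*}
\vol_n\bigl((1-c)K\cap H^-\bigr)\;\le\; a\,\max\{0,(1-c)h_K(u)-h\}\,\vol_{n-1}([x_1,\dots,x_n]),
\end{equation*}
obtained by enclosing the shallow convex cap inside the pyramid with apex a supporting point of $(1-c)K$ in direction $u$ and base the inscribed simplex $[x_1,\dots,x_n]\subset\partial K\cap H$; the hypothesis $\epsilon\le h_K(u)/n$ is used here to ensure that this bound is valid in the shallow-cap regime. Substituting this into the integral from the previous step and applying Theorem~\ref{lem:Zähle} rewrites the remaining $(\partial K)^n$-integral as an integral over $\SSn\times\RR_+\times(\partial K\cap H)^n$ together with an extra factor $(n-1)!\,\vol_{n-1}([x_1,\dots,x_n])\prod_{j}l_H(x_j)$; the two $\vol_{n-1}$-factors now combine to give $(\vol_{n-1}([x_1,\dots,x_n]))^2$. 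Finally, the $h$-integration is cut to $[h_K(u)-\epsilon,h_K(u)]$: the integrand vanishes for $h\ge(1-c)h_K(u)$ by the $\max$, while for $h\le h_K(u)-\epsilon$ one has $\PP_f(\partial K\cap H^+)\le 1-\gamma(\epsilon,K,f)$, making that contribution exponentially small in $N$ and absorbing it into $a$. The condition $\epsilon>ch_K(u)$ guarantees $h_K(u)-\epsilon<(1-c)h_K(u)$, so the retained range genuinely supports the $\max$ factor.

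\emph{Main obstacle.} The crux is the pyramid cap estimate: controlling the $n$-volume of the cap of $(1-c)K$--whose $(n-1)$-dimensional base sits in $(1-c)K\cap H\subset K\cap H$--by the $(n-1)$-volume of a random simplex with vertices merely required to lie on the possibly much smaller set $\partial K\cap H$, uniformly in $u\in\SSn$ and $h\in[h_K(u)-\epsilon,h_K(u)]$. This is where the $C^2_+$ smoothness of $K$ and the smallness condition $\epsilon\le h_K(u)/n$ are essential.
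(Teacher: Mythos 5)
Your reduction to $\EE[\vol_n((1-c)K\setminus P_N)]$ and the facet-probability computation are fine, and replacing the paper's Lemma~\ref{lem:SchüttWerner2} (which bounds $\PP_f^N(\{0\notin P_N\})$) with the observation that the ``wrong-orientation'' facet event has probability $(\PP_f(\partial K\cap H^-))^{N-n}\le(1-\varepsilon_0)^{N-n}$ is a legitimate alternative (though note the uniform bound $\PP_f(\partial K\cap H^+)\ge\varepsilon_0>0$, coming from $h\ge 0$ and $0\in\intt K$, is what makes this exponentially small --- not merely ``$h$ near $h_K(u)$'').

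The genuine gap is the cap estimate. You cover $(1-c)K\setminus P_N$ by the full slabs $(1-c)K\cap H_F^-$ rather than partitioning it, as the paper does, into the cone pieces $(1-c)K\cap H_F^-\cap\cone(F)$ (after first forcing $0\in P_N$). Having discarded the cone restriction, you then need the pointwise bound
\begin{equation*}
\vol_n\bigl((1-c)K\cap H^-\bigr)\;\le\;a\,\max\{0,(1-c)h_K(u)-h\}\,\vol_{n-1}([x_1,\dots,x_n]),
\end{equation*}
and this is simply false for $n\ge 3$. The left side depends only on $(u,h)$, while the right side can be made arbitrarily small by clustering the $n$ random points on $\partial K\cap H$ close together: the cap $(1-c)K\cap H^-$ has $(n-1)$-dimensional base $(1-c)K\cap H$, whereas $[x_1,\dots,x_n]$ is a random inscribed simplex in $\partial(K\cap H)$ whose volume is not bounded below. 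No pyramid with base $[x_1,\dots,x_n]$ encloses the cap, and neither $C^2_+$ smoothness nor $\epsilon\le h_K(u)/n$ rescues this --- the obstruction is the variability of the random simplex, not the shape of $K$. Consequently the two $\vol_{n-1}$ factors (one from this alleged cap bound, one from Theorem~\ref{lem:Zähle}) do not legitimately combine into the $(\vol_{n-1}([x_1,\dots,x_n]))^2$ appearing in the lemma. The paper's cone intersection is precisely what repairs this: $\cone(x_1,\dots,x_n)\cap H$ \emph{is} the simplex $[x_1,\dots,x_n]$, so $(1-c)K\cap H^-\cap\cone(x_1,\dots,x_n)$ sits inside a frustum over that simplex, and the pointwise bound holds with $\vol_{n-1}([x_1,\dots,x_n])$ as the base area.
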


\begin{proof}[Proof of Lemma \ref{Schritt1}]
With the above choice of the parameter $c$ in \eqref{choice:c} we obtain for sufficiently large $N$,
\begin{align*}
\text{vol}_n(K \setminus (1-c)K) = \int\limits_{\partial K} \cdots \int\limits_{\partial K} \text{vol}_n(K \setminus P_N)\, \dint \PP_f(x_1) \ldots \dint \PP_f(x_N).
\end{align*}
We  combine this observation with the relation
\begin{align*}
\text{vol}_n((1-c)K \Delta P_N) = \text{vol}_n(K \setminus (1-c)K) - \text{vol}_n(K \setminus P_N) + 2\, \text{vol}_n((1-c)K \cap P_N^c)
\end{align*} 
and Lemma \ref{lem:SchüttWerner2} and  obtain that  for sufficiently large $N$,
\begin{align*}
&\EE[\text{vol}_n((1-c)K \Delta P_N)]\\
&\qquad =  \int\limits_{\partial K} \cdots \int\limits_{\partial K} \text{vol}_n((1-c)K \Delta P_N) \dint \PP_f(x_1) \ldots \dint \PP_f(x_N)\\
&\qquad = \text{vol}_n(K \setminus (1-c)K) - \int\limits_{\partial K} \cdots \int\limits_{\partial K} \text{vol}_n(K \setminus P_N) \dint \PP_f(x_1) \ldots \dint \PP_f(x_N)\\
&\qquad \qquad  + 2\int\limits_{\partial K} \cdots \int\limits_{\partial K} \text{vol}_n((1-c)K \cap P_N^c)  \dint \PP_f(x_1) \ldots \dint \PP_f(x_N)\\
&\qquad = 2\int\limits_{\partial K} \cdots \int\limits_{\partial K} \text{vol}_n((1-c)K \cap P_N^c)  \dint \PP_f(x_1) \ldots \dint \PP_f(x_N)\\
&\qquad =  2\int\limits_{\partial K} \cdots \int\limits_{\partial K} \text{vol}_n((1-c)K \cap P_N^c) \mathds{1}_{\{0\in P_N\}} \dint \PP_f(x_1) \ldots \dint \PP_f(x_N)\\
&\qquad \qquad +  2\int\limits_{\partial K} \cdots \int\limits_{\partial K} \text{vol}_n((1-c)K \cap P_N^c)  \mathds{1}_{\{0\notin P_N\}} \dint \PP_f(x_1) \ldots \dint \PP_f(x_N)\\ 
&\qquad \le 2\int\limits_{\partial K} \cdots \int\limits_{\partial K} \text{vol}_n((1-c)K \cap P_N^c) \mathds{1}_{\{0\in P_N\}} \dint \PP_f(x_1) \ldots \dint \PP_f(x_N)\\
&\qquad \qquad +  2\, \text{vol}_n(K)\, 	\PP_f^N(\{0 \notin [x_1,\ldots,x_N]\})\\
&\qquad \le 2\int\limits_{\partial K} \cdots \int\limits_{\partial K} \text{vol}_n((1-c)K \cap P_N^c) \mathds{1}_{\{0\in P_N\}} \dint \PP_f(x_1) \ldots \dint \PP_f(x_N)\\
&\qquad \qquad +  2\, \text{vol}_n(K)\, 2^n\left(1 - \min\limits_{\sigma} \int\limits_{\partial K^\sigma} f(x) \dint \mu_{\partial K}\right)^N. 
\end{align*}
The density $f$ is strictly positive everywhere and since the origin is  in the interior of $K$,  the second summand is essentially of order $b^{-N}$, where $b > 1$. Later we will see that the first summand is of order $N^{-\frac{2}{n-1}}$ and thus it is enough to consider the first one in what follows.\\
\vskip 1mm
\noindent
Next, we introduce the function $\Phi_{j_1,\ldots,j_n} : \partial K \times \cdots \times \partial K \rightarrow \RR$ as 
\begin{small}
\begin{align*}
&\Phi_{j_1,\ldots,j_n} (x_1,\ldots,x_N)\\
&:= \begin{cases}
0 &: [x_{j_1},\ldots,x_{j_n}] \notin \mathcal{F}_{n-1}(P_N) \, \text{or} \, 0 \notin P_N\\
\text{vol}_n((1-c)K \cap P_N^c \cap \text{cone}(x_{j_1},\ldots, x_{j_n})) \mathds{1}_{\{0 \in P_N\}} &: [x_{j_1},\ldots,x_{j_n}] \in \mathcal{F}_{n-1}(P_N) \, \text{and} \, 0 \in P_N,
\end{cases}
\end{align*}  
\end{small}
where $\mathcal{F}_{n-1}(P_N)$ denotes the set of facets of $P_N$ and
\begin{align*}
\text{cone}(x_1,\ldots,x_n) := \left\{\sum_{i=1}^{n} a_i\, x_i : a_i \ge 0, 1\le i\le n   \right\}. 
\end{align*}
For all random polytopes $P_N$ that contain the origin as an interior point,  we have  that 
\begin{align*}
\RR^n = \bigcup\limits_{[x_{j_1},\ldots,x_{j_n}] \in \mathcal{F}_{n-1}(P_N)} \text{cone}(x_{j_1},\ldots, x_{j_n}).
\end{align*}
Moreover,
\begin{align*}
&\PP_f^{N-n}(\{(x_{n+1},\ldots,x_N) :  [x_{1},\ldots,x_{n}] \in \mathcal{F}_{n-1}(P_N) \, \text{and} \, 0 \in P_N \})\\
&\qquad = \left(\, \int\limits_{\partial K \cap H^+} f(x) \dint \mu_{\partial K}\right)^{N-n} = \left(\PP_f(\partial K \cap H^+)\right)^{N-n},
\end{align*}
where $H$ is the hyperplane spanned by the points $x_1,\ldots,x_n$ and we recall the definition of $\PP_f(\partial K \cap H^+)$ given in \eqref{H+}. 
Now, note that a random polytope is simplicial with probability $1$. Therefore, and since the set where $H$ is not well defined has measure zero and all $N$ points are identically distributed, we arrive at 
\begin{align*}
&\int\limits_{\partial K} \cdots \int\limits_{\partial K} \text{vol}_n((1-c)K \cap P_N^c) \mathds{1}_{\{0\in P_N\}} \dint \PP_f(x_1) \ldots \dint \PP_f(x_N)\\
&\qquad  =\int\limits_{\partial K} \cdots \int\limits_{\partial K} \sum_{\{j_1,\ldots,j_n\} \subseteq \{1,\ldots, N\}} \Phi_{j_1,\ldots,j_n} (x_1,\ldots,x_N) \dint \PP_f(x_1) \ldots \dint \PP_f(x_N)\\
&\qquad = \binom{N}{n} \int\limits_{\partial K} \cdots \int\limits_{\partial K} \Phi_{1,\ldots,n} (x_1,\ldots,x_N) \dint \PP_f(x_1) \ldots \dint \PP_f(x_N)\\
&\qquad = \binom{N}{n} \int\limits_{\partial K} \cdots \int\limits_{\partial K} \left(\PP_f(\partial K \cap H^+)\right)^{N-n}\\
& \qquad \qquad \times \text{vol}_n((1-c)K \cap H^- \cap \text{cone}(x_{1},\ldots, x_{n})) \mathds{1}_{\{0 \in P_N\}}  \dint \PP_f(x_1) \ldots \dint \PP_f(x_n)\\
&\qquad \le \binom{N}{n} \int\limits_{\partial K} \cdots \int\limits_{\partial K} \left(\PP_f(\partial K \cap H^+)\right)^{N-n}\\
& \qquad \qquad \times \text{vol}_n((1-c)K \cap H^- \cap \text{cone}(x_{1},\ldots, x_{n})) \dint \PP_f(x_1) \ldots \dint \PP_f(x_n).
\end{align*}
Theorem \ref{lem:Zähle} now yields that for sufficiently large $N$,
\begin{align*}
&\EE[\text{vol}_n((1-c)K \Delta P_N)]\\
&\qquad \le a\, \binom{N}{n}\, (n-1)! \int\limits_{\SSn} \int\limits_0^\infty \int\limits_{\partial K \cap H} \cdots \int\limits_{\partial K \cap H} \left(\PP_f(\partial K \cap H^+)\right)^{N-n} \text{vol}_{n-1}([x_1,\ldots,x_n])\\
&\qquad \qquad \times \text{vol}_n((1-c)K \cap H^- \cap \text{cone}(x_{1},\ldots, x_{n})) \prod_{j=1}^{n} l_{H}(x_j)\\
&\qquad \qquad \times \dint \PP_{f_{\partial K \cap H}}(x_1) \cdots \dint \PP_{f_{\partial K \cap H}} (x_n) \dint h \dint \mu_{\SSn}(u).
\end{align*}
Notice that  $h \in [0,h_K(u)]$, where we recall that $h_K(u)$ is the support function of $K$ in direction $u$. The same arguments as in \cite[page 9]{LudwigSchuettWerner} and \cite[page 2255]{Reitzner} show that  it is enough to bound the range of integration for $h$ from below by $h_K(u) - \epsilon$, where $\epsilon > 0$ is sufficiently small, since the remaining expression decreases exponentially fast. In particular, for sufficiently large $N$ we can choose $\epsilon$ such that $c h_K(u) < \epsilon \le h_K(u)/n$. Similar to \cite[page 9]{LudwigSchuettWerner}, 
\begin{align*}
&\text{vol}_n((1-c)K \cap H^- \cap \text{cone}(x_{1},\ldots, x_{n}))\\
&\qquad \le \frac{h}{n}\, \text{vol}_{n-1}([x_1,\ldots,x_n])\, \cdot \max \left\{0, \left(\frac{(1-c)h_K(u)}{h}\right)^n - 1\right\}.
\end{align*}
Since  $c$ is of the order $N^{- \frac{2}{n+1}}$ and $\epsilon \le h_K(u)/n$, for sufficiently large $N$, 
\begin{align*}
\frac{(1-c) h_K(u) -h}{h} \le \frac{(1-c) h_K(u) - h_K(u) + \epsilon}{h_K(u) - \epsilon} \le  \frac{\frac{1}{n} - c}{1-\frac{1}{n}} \le \frac{1}{n-1}.
\end{align*}
Thus,
\begin{align*}
&\frac{1}{n} \left[\left(\frac{(1-c)h_K(u)}{h}\right)^n - 1\right]\\
&\qquad = \frac{1}{n} \left[\left( \frac{h+(1-c)h_K(u)-h}{h} \right)^n - 1\right]\\
&\qquad = \frac{1}{n} \left[\left(1 + \frac{(1-c)h_K(u) -h}{h}\right)^n  - 1\right]\\
&\qquad =  \frac{1}{n} \left[ \  n \   \frac{(1-c) h_K(u) -h}{h} + \frac{n(n-1) }{2} \left(\frac{(1-c) h_K(u) -h}{h}\right)^2  + \cdots  \right]\\
&\qquad \le  \frac{(1-c) h_K(u) -h}{h} \cdot \sum_{k=0}^{\infty} \frac{n^k}{k!} \left(\frac{(1-c) h_K(u) -h}{h}\right)^k\\
&\qquad \le \exp\left(\frac{n}{n-1}\right)\, \frac{(1-c) h_K(u) -h}{h}\\
&\qquad\le a\, \frac{(1-c)h_K(u) - h}{h}.
\end{align*}
Therefore, for sufficiently large $N$,
\begin{align*}
&\text{vol}_n((1-c)K \cap H^- \cap \text{cone}(x_{1},\ldots, x_{n}))\\
&\qquad \qquad \le a\, \text{vol}_{n-1}([x_1,\ldots,x_n]) \cdot \max \{0, ((1-c)h_K(u) - h)\}.
\end{align*}
This proves the lemma.
\end{proof}

To evaluate  the innermost integral in the expression of the foregoing lemma, we first recall the necessary notation and results we need from \cite{Reitzner}.

Let $x(u)$ be the point on $\partial K$ with fixed outer unit normal vector $u\in \SSn$. Since $K$ has a twice differentiable boundary, there is a paraboloid $Q^{(x(u))}_2$ given by a quadratic form $b_2 := b_2^{(x(u), x(u))}$ that osculates $\partial K$ at $x(u)$.  An explicit construction can be found  in, e.g., \cite[page 2265]{Reitzner}. \\
Let $\RR^n = (\RR_+ \times \SS^{n-2}) \times \RR$, and denote by $(rv,z)$ a point in $\RR^n$, where $v\in \SS^{n-2}$, $r \in \RR_+$ and $z \in \RR$. We identify the support plane of $\partial K$ at $x(u)$ with the plane $z=0$ and $x(u)$ with the origin, so that $K$ is contained in the half space corresponding to $z\ge 0$. Please note also  that $h = h_K(u) - z$ by construction. The following lemma that summarizes results from \cite[pages 2265 and 2271]{Reitzner}  will be crucial. 

\begin{lem}[\cite{Reitzner}]\label{lem:Reitzner}
Let $\delta > 0$ be sufficiently small. Then, there exists $\lambda > 0$,  only depending on $\delta$ and $K$,  such that for each boundary point $x(u) \in \partial K$ the $\l$-neighbourhood $U^\l$ of $x(u)$ in $\partial K$ defined by $\text{proj}_{\RR^{n-1}} U^\l = \l \BB^{n-1}$ can be represented by a convex function $z := g(rv) := g^{(x(u))}(rv)$ that satisfies 
\begin{align}\label{radialKH}
(1+\delta)^{- \frac{1}{2}}\, b_2(v)^{- \frac{1}{2}}\, z^{1 \over 2} \le r \le (1+\delta)^{\frac{1}{2}}\, b_2(v)^{- \frac{1}{2}}\, z^{1 \over 2}
\end{align}
and
\begin{align}\label{radial2}
(1+\delta)^{- \frac{3}{2}}\, 2^{-1}\, b_2(v)^{- \frac{1}{2}}\, z^{- \frac{1}{2}} \le \frac{l_H(rv)}{\langle v, N_{K\cap H}(rv) \rangle} \le (1+\delta)^{\frac{3}{2}}\, 2^{-1}\, b_2(v)^{- \frac{1}{2}}\, z^{- \frac{1}{2}}
\end{align}
in this neighborhood $U^\l$. Here, for fixed $rv$, $H$ is the hyperplane that contains  $(rv,g(rv))$ and is parallel to $\RR^{n-1}$ and $N_{K\cap H}(rv)$ is the outer unit normal vector to $\partial K\cap H$ at this point.\\ 
Moreover, for the density $f$, for all $p \in U^\l$,  it holds that
\begin{align}\label{radial3}
(1+\delta)^{-1}\, f(x(u)) \le f(p) \le (1+\delta) f(x(u)).
\end{align}
\end{lem}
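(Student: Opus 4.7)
The plan is to exploit the $C^2_+$ regularity of $\partial K$ together with a compactness argument to get uniform Taylor estimates across the boundary, and to treat $f$ separately via uniform continuity. First I would fix $x(u) \in \partial K$ and introduce local coordinates: translate so that $x(u) = 0$ and rotate so that the tangent hyperplane at $x(u)$ is $\RR^{n-1} \times \{0\}$ with $K$ lying in the half space $z \ge 0$. In these coordinates there is a neighbourhood of $0$ in $\partial K$ that is the graph of a convex $C^2$ function $g=g^{(x(u))}$ on $\RR^{n-1}$ satisfying $g(0)=0$ and $\nabla g(0)=0$; the Hessian $D^2 g(0)$ is positive definite because the Gaussian curvature $\kappa_K(x(u))$ is strictly positive, and it is precisely the quadratic form that defines the osculating paraboloid $Q_2^{(x(u))}$, so $b_2(v) = \tfrac{1}{2}\langle v, D^2 g(0) v\rangle$ for $v \in \SS^{n-2}$.

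For the radial inequality \eqref{radialKH}, I would apply Taylor's theorem to write
\begin{align*}
g(rv) \; = \; r^2 \, b_2(v) \; + \; \mathrm{Error}(r,v),
\end{align*}
where $\mathrm{Error}(r,v)/r^2 \to 0$ as $r\to 0$, uniformly in $v \in \SS^{n-2}$ for fixed $x(u)$, because $D^2 g$ is continuous. Compactness of $\partial K$ together with the $C^2$ hypothesis then promotes this to uniformity in $x(u)$: there is $\lambda > 0$, depending only on $\delta$ and on the modulus of continuity of $D^2 g$ and the lower bound of $\kappa_K$ on $\partial K$, such that on the $\lambda$-neighbourhood $U^\lambda$
\begin{align*}
(1+\delta)^{-1} \, r^2 \, b_2(v) \; \le \; z = g(rv) \; \le \; (1+\delta) \, r^2 \, b_2(v).
\end{align*}
Solving for $r$ gives \eqref{radialKH}. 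For \eqref{radial2}, the outer unit normal to $\partial K$ at $(rv,g(rv))$ is parallel to $(\nabla g(rv),-1)$, and the outer unit normal to $\partial K \cap H$ at the same point (with $H$ the horizontal hyperplane at height $z$) is parallel to $\nabla g(rv)$ in $\RR^{n-1}$. A direct computation gives $l_H(rv) = |\nabla g(rv)|^{-1}\sqrt{1+|\nabla g(rv)|^2}$ and $\langle v, N_{K\cap H}(rv)\rangle = \langle v, \nabla g(rv)\rangle/|\nabla g(rv)|$, so the ratio reduces to $\sqrt{1+|\nabla g(rv)|^2}/\langle v, \nabla g(rv)\rangle$. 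Substituting the Taylor expansion $\nabla g(rv) = 2rb_2(v)\,v + o(r)$ (which holds because the directional derivative along $v$ of $g$ is to leading order $2r b_2(v)$) and using \eqref{radialKH} to trade $r$ for $b_2(v)^{-1/2}z^{1/2}$ gives the leading term $\tfrac{1}{2}\,b_2(v)^{-1/2} z^{-1/2}$, with the multiplicative $(1+\delta)^{\pm 3/2}$ absorbing all remainders from the Taylor terms and the $r \leftrightarrow z$ conversion.

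The inequality \eqref{radial3} for the density is obtained directly from uniform continuity: $f$ is continuous and strictly positive on the compact set $\partial K$, hence $f_{\min} := \min_{\partial K} f > 0$ and $f$ is uniformly continuous, so $\lambda$ can be shrunk, if necessary, so that the oscillation of $f$ over any $\lambda$-ball in $\partial K$ is at most $\delta f_{\min} \le \delta f(x(u))$. The main obstacle is the uniformity of the estimate in \eqref{radialKH} and \eqref{radial2} over all base points $x(u)\in \partial K$ simultaneously; this is precisely where the $C^2_+$ assumption and the compactness of $\partial K$ are indispensable, since they provide a global modulus of continuity for $D^2 g$ and a uniform positive lower bound on $b_2(v)$.
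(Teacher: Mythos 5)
The paper does not actually prove this lemma; it is stated as a citation, summarizing estimates taken directly from Reitzner (pp.\ 2265 and 2271 of \cite{Reitzner}), so there is no ``paper proof'' to compare against. That said, your reconstruction is essentially the correct (and standard) route to these estimates: local graph representation with $g(0)=0$, $\nabla g(0)=0$, second-order Taylor expansion giving $z = g(rv) = r^2 b_2(v) + o(r^2)$, the explicit identities $l_H(rv) = \sqrt{1+|\nabla g(rv)|^2}\,/\,|\nabla g(rv)|$ and $\langle v, N_{K\cap H}(rv)\rangle = \langle v, \nabla g(rv)\rangle / |\nabla g(rv)|$, and uniformity in the base point $x(u)$ by compactness of $\partial K$ combined with the $C^2_+$ hypothesis. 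The treatment of \eqref{radial3} by uniform continuity of $f$ and strict positivity is exactly right.

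One imprecision worth flagging: the displayed expansion $\nabla g(rv) = 2rb_2(v)\,v + o(r)$ is not correct as a statement about the gradient vector. With $A := D^2g(0)$ one has $\nabla g(rv) = rAv + o(r)$, and $Av$ is in general \emph{not} parallel to $v$ (unless $v$ is a principal direction), so $\nabla g(rv)$ is not asymptotically a multiple of $v$. What your argument actually needs and uses is the scalar estimate
\begin{align*}
\langle v, \nabla g(rv)\rangle = r\,\langle v, A v\rangle + o(r) = 2r\,b_2(v) + o(r),
\end{align*}
together with $|\nabla g(rv)| = O(r)$ to dispose of the $\sqrt{1+|\nabla g|^2}$ factor; both of these are correct and suffice for the ratio in \eqref{radial2}. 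As written, though, the vector-valued claim would be wrong if read literally and used elsewhere, so it should be replaced by the inner-product version above. With that correction, the proposal is a faithful reconstruction of the Reitzner estimates being invoked.
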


 We next estimate  the innermost integral of Lemma \ref{Schritt1}. 

\begin{lem}\label{Schritt2}
	Let $x(u)$ be the point on $\partial K$ with fixed outer unit normal vector $u\in \SSn$ and denote by $z$ the distance from $H$ to the support plane of $\partial K$ at $x(u)$, i.e., $z = h_K(u) - h$. Then, for all sufficiently small $\delta > 0$, 
	\begin{align*}
	&\int\limits_{\partial K \cap H} \cdots \int\limits_{\partial K \cap H} (\text{vol}_{n-1}([x_1,\ldots,x_n]))^2 \prod_{j=1}^{n} l_{H}(x_j) \dint \PP_{f_{\partial K \cap H}}(x_1) \cdots \dint \PP_{f_{\partial K \cap H}} (x_n)\\
	&\qquad \le (1+\delta)^{\frac{n(n+3)}{2}}\,  2^{\frac{n^2-n-2}{2}}\,  z^{\frac{n^2-n-2}{2}} \frac{n \   \omega_{n-1}^n}{(n-1)!\, (n-1)^{n-1}}\, f(x(u))^n\, \kappa_K(x(u))^{-\frac{n}{2} - 1}\\
	&\qquad \qquad + \delta O(z^{\frac{n^2-n-2}{2}}),
	\end{align*}
	where the constant in $O(\cdot)$ can be chosen independently of $x(u)$ and $\delta$.
\end{lem}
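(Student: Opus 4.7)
The plan is to push the $(n-2)$-dimensional integrals down to $\SS^{n-2}$ via the local graph representation of $\partial K$ near $x(u)$ provided by Lemma~\ref{lem:Reitzner}, and then to collapse the resulting weighted integral to the uniform one evaluated in Miles' Lemma~\ref{lem:Miles} by means of a linear change of variables.

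For $z$ sufficiently small, $\partial K \cap H \subset U^\lambda$, and each $x_j \in \partial K \cap H$ corresponds in the local frame at $x(u)$ to a unique $v_j \in \SS^{n-2}$ via the radial parametrization $x_j \leftrightarrow (r(v_j) v_j, g(r(v_j) v_j))$ with $g(r(v_j) v_j) = z$. The boundary surface element on $\partial K \cap H$, viewed as a radial hypersurface in $H \cong \RR^{n-1}$, satisfies $\dint\mu_{\partial K \cap H}(x_j) = r(v_j)^{n-2}\, \langle v_j, N_{K \cap H}(r(v_j) v_j)\rangle^{-1}\, \dint\mu_{\SS^{n-2}}(v_j)$, so multiplying by $l_H(x_j)$ as bounded in \eqref{radial2} produces the cancellation
\[
l_H(x_j)\, \dint\mu_{\partial K \cap H}(x_j) \;\le\; (1+\delta)^{(n+1)/2}\cdot 2^{-1}\, b_2(v_j)^{-(n-1)/2}\, z^{(n-3)/2}\, \dint\mu_{\SS^{n-2}}(v_j),
\]
where the exponent $(n+1)/2 = 3/2 + (n-2)/2$ comes from combining \eqref{radial2} with the $r(v_j)^{n-2}$ factor via \eqref{radialKH}. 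Further, \eqref{radialKH} gives $r(v_j) v_j = z^{1/2}\,\phi(v_j)(1 + O(\delta))$ with $\phi(v) := b_2(v)^{-1/2} v$, and since the simplex volume is multilinear in each $r_j$ separately, $(\text{vol}_{n-1}([x_1, \ldots, x_n]))^2 = z^{n-1}(\text{vol}_{n-1}([\phi(v_1), \ldots, \phi(v_n)]))^2(1 + O(\delta))$. Combining with $f(x_j) \le (1+\delta) f(x(u))$ from \eqref{radial3}, the integrand is dominated by
\[
(1+\delta)^{n(n+3)/2}\, 2^{-n}\, f(x(u))^n\, z^{(n^2-n-2)/2}\prod_{j=1}^n b_2(v_j)^{-(n-1)/2}(\text{vol}_{n-1}([\phi(v_1), \ldots, \phi(v_n)]))^2\, \dint\mu_{\SS^{n-2}}(v_j)
\]
plus a remainder of order $\delta\, z^{(n^2-n-2)/2}$ collected from the Taylor expansions; the exponent $n(n+3)/2 = n(n+1)/2 + n$ reflects the $n$ factors of $l_H\,\dint\mu$ and the $n$ factors of $f$, while the $z$-exponent arises as $n(n-3)/2 + (n-1)$.

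To evaluate the residual $\SS^{n-2}$-integral, write $b_2(v) = \langle v, Bv\rangle$ with $B$ symmetric positive definite and set $T := B^{1/2}$. Pulling back Lebesgue measure on $\RR^{n-1}$ through $T$ in polar coordinates shows that the self-map $v \mapsto \tilde v := Tv/|Tv|$ of $\SS^{n-2}$ has Jacobian $|\det T|/|Tv|^{n-1}$, and since $|Tv|^2 = b_2(v)$ this yields the key identity $b_2(v)^{-(n-1)/2}\, \dint\mu_{\SS^{n-2}}(v) = |\det T|^{-1}\, \dint\mu_{\SS^{n-2}}(\tilde v)$. Moreover $\phi(v) = |Tv|^{-1} v = T^{-1}\tilde v$, so linearity of $T^{-1}$ gives $\text{vol}_{n-1}([\phi(v_1), \ldots, \phi(v_n)]) = |\det T|^{-1}\text{vol}_{n-1}([\tilde v_1, \ldots, \tilde v_n])$. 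Substituting and applying Lemma~\ref{lem:Miles}, the $\SS^{n-2}$-integral equals $|\det T|^{-(n+2)}\cdot \frac{n\,\omega_{n-1}^n}{(n-1)!\,(n-1)^{n-1}} = (\det B)^{-(n+2)/2}\cdot \frac{n\,\omega_{n-1}^n}{(n-1)!\,(n-1)^{n-1}}$. Finally, since $b_2$ is one-half of the Hessian of $g$ at the origin and $\kappa_K(x(u)) = \det D^2 g(0)$, we have $\det B = 2^{-(n-1)}\kappa_K(x(u))$, so $(\det B)^{-(n+2)/2} = 2^{(n-1)(n+2)/2}\kappa_K(x(u))^{-(n+2)/2}$. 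Multiplying by the prefactor $2^{-n}$ and using $(n-1)(n+2)/2 - n = (n^2-n-2)/2$ produces exactly the main term of the claim.

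The main technical obstacle is the uniform bookkeeping of the $(1+\delta)$-errors through the many substitutions; in particular, ensuring that the polynomial dependence of $(\text{vol}_{n-1}([x_1, \ldots, x_n]))^2$ on the radial parameters $r_j$ is captured by a single $O(\delta)$ correction uniformly in the $v_j$'s. A secondary point is the Jacobian computation for the spherical map $v \mapsto Tv/|Tv|$, most easily obtained from the pull-back of Lebesgue measure under the linear map $T$.
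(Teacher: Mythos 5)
Your overall scheme matches the paper's: push the integrals over $\partial K\cap H$ down to $\SS^{n-2}$ via the osculating-paraboloid coordinates of Lemma~\ref{lem:Reitzner}, reduce to a uniform spherical integral, and apply Miles' Lemma~\ref{lem:Miles}. Your route through the residual $\SS^{n-2}$-integral is actually a bit slicker than the paper's: where the paper rewrites the integral as one over the interior of the ellipsoid $E=Q_2^{(x(u))}\cap\{z=1\}$, normalizes by $\vol_{n-1}(E)=2^{(n-1)/2}\kappa_K(x(u))^{-1/2}\kappa_{n-1}$, transforms $E$ to $\BB^{n-1}$, and goes back to $\SS^{n-2}$, you collapse all of this into one step using the spherical Jacobian identity $b_2(v)^{-(n-1)/2}\,\dint\mu_{\SS^{n-2}}(v)=|\det T|^{-1}\,\dint\mu_{\SS^{n-2}}(\tilde v)$ together with $\phi(v)=T^{-1}\tilde v$. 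The arithmetic $(\det B)^{-(n+2)/2}\cdot 2^{-n}=2^{(n^2-n-2)/2}\kappa_K^{-n/2-1}$ checks out and reproduces the paper's constant.

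The genuine gap is in the error bookkeeping, and you flagged it yourself but did not resolve it: the claim that ``the simplex volume is multilinear in each $r_j$ separately,'' used to convert the radial bound $r(v_j)=z^{1/2}b_2(v_j)^{-1/2}(1+O(\delta))$ into a \emph{multiplicative} estimate $(\vol_{n-1}([x_1,\ldots,x_n]))^2=z^{n-1}(\vol_{n-1}([\phi(v_1),\ldots,\phi(v_n)]))^2(1+O(\delta))$, is false as a uniform statement. The $(n-1)$-volume of the simplex $[r_1v_1,\ldots,r_nv_n]$ is the absolute value of a form that is multilinear in the $r_j$'s, but the absolute value is only multilinear when the signed form has a fixed sign; more importantly, the signed form can be arbitrarily close to zero for nearly-degenerate configurations of $v_1,\ldots,v_n$, so that a perturbation of each $r_j$ by a factor $1\pm O(\delta)$ produces an additive error that is not controlled multiplicatively. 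This is exactly why the paper does \emph{not} use a multiplicative estimate here: it invokes the additive bound from Reitzner's equation (68), $|\vol_{n-1}([x_1,\ldots,x_n])-\vol_{n-1}([r_2(v_1)v_1,\ldots,r_2(v_n)v_n])|\le\delta\,O(z^{(n-1)/2})$, uniformly in the $v_j$'s, which survives cancellation near the degenerate set. You would need to replace your $(1+O(\delta))$ step by this additive estimate (and then carry the resulting $\delta\,O(z^{(n^2-n-2)/2})$ remainder separately, as the paper does) for the argument to go through.
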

\begin{proof}[Proof of Lemma \ref{Schritt2}]
	The proof follows closely the arguments given in \cite{Reitzner}. 
	We first replace the random points $x_i$, $i\in \{1,\ldots, n\}$,  chosen on $\partial K \cap H$ by random points chosen on the intersection of $H$ with the paraboloid $Q_2^{(x(u))}$. In order to do this, we write each point $x_i$, $i\in \{1,\ldots, n\}$, as $x_i = r(v_i)v_i$, where $r(v_i)$ is  the radial function of $K \cap H$ with estimates given above in \eqref{radialKH}. The result presented in \cite[equation (68)]{Reitzner} then implies that 
	\begin{align*}
	\left| \text{vol}_{n-1}([x_1,\ldots,x_n]) - \text{vol}_{n-1}([r_2(v_1) v_1,\ldots,r_2(v_n)v_n]) \right| \le \delta O(z^{\frac{n-1}{2}}),
	\end{align*}  
    where $r_2(v) := b_2(v)^{-\frac{1}{2}} z^{1 \over 2}$, $\delta > 0$ is arbitrarily small and the constant in $O(\cdot)$ can be chosen independently of $x(u)$ and $\delta$. We therefore obtain that
	\begin{align*}
	&\int\limits_{\partial K \cap H} \cdots \int\limits_{\partial K \cap H} (\text{vol}_{n-1}([x_1,\ldots,x_n]))^2 \prod_{j=1}^{n} l_{H}(x_j) \dint \PP_{f_{\partial K \cap H}}(x_1) \cdots \dint \PP_{f_{\partial K \cap H}} (x_n)\\
	&\qquad = \int\limits_{\partial K \cap H} \cdots \int\limits_{\partial K \cap H} \left[(\text{vol}_{n-1}([r_2(v_1) v_1,\ldots,r_2(v_n)v_n]))^2 + \delta O(z^{n-1})\right]\\
	&\qquad \qquad \times \prod_{j=1}^{n} l_{H}(x_j) \dint \PP_{f_{\partial K \cap H}}(x_1) \cdots \dint \PP_{f_{\partial K \cap H}} (x_n),
	\end{align*}
	where the constant in $O(\cdot)$ can be chosen independently of $x(u)$ and $\delta$. 
	\newline
	We first  evaluate the integral involving the $O(\cdot)$ term.  Notice that the density $f$ is uniformly bounded and that by \eqref{radial2},  integration concerning $l_H(x_j) \dint \PP_{f_{\partial K \cap H}}(x_j)$ results in terms  of order
	$O(z^{- \frac{1}{2}}) \text{vol}_{n-2}(\partial K \cap H)$.
	Since, in view of \eqref{radialKH},   $\text{vol}_{n-2}(\partial K \cap H) = O(z^{\frac{n-2}{2}})$, 
	\begin{align*}
	&\int\limits_{\partial K \cap H} \cdots \int\limits_{\partial K \cap H} \delta O(z^{n-1}) \prod_{j=1}^{n} l_{H}(x_j) \dint \PP_{f_{\partial K \cap H}}(x_1) \cdots \dint \PP_{f_{\partial K \cap H}} (x_n)\\
	&\qquad = \delta O(z^{n-1}) O(z^{- \frac{n}{2}}) (\text{vol}_{n-2}(\partial K \cap H))^n\\
	&\qquad = \delta O(z^{n-1-\frac{n}{2} + n \frac{n-2}{2}})\\
	&\qquad = \delta O(z^{\frac{n^2 - n - 2}{2}}),
	\end{align*}
	where the constant in $O(\cdot)$ can be chosen independently of $x(u)$ and $\delta$.\\
	Now, we turn to the first summand. Rewriting the integral over $\SS^{n-2}$ and using  \eqref{radialKH}, \eqref{radial2} and \eqref{radial3}, we get as in \cite[page 2274]{Reitzner} that 
	\begin{align*}
	&\int\limits_{\partial K \cap H} \cdots \int\limits_{\partial K \cap H} (\text{vol}_{n-1}([r_2(v_1) v_1,\ldots,r_2(v_n)v_n]))^2  \prod_{j=1}^{n} l_{H}(x_j) \dint \PP_{f_{\partial K \cap H}}(x_1) \cdots \dint \PP_{f_{\partial K \cap H}} (x_n)\\
	&\qquad = \int\limits_{\SS^{n-2}} \cdots \int\limits_{\SS^{n-2}} (\text{vol}_{n-1}([r_2(v_1) v_1,\ldots,r_2(v_n)v_n]))^2\\
	&\qquad \qquad \times \prod_{j=1}^{n} f(r(v_j)v_j) \frac{l_H(r(v_j)v_j)\, r(v_j)^{n-2}}{\langle v_j, N_{K\cap H}(r(v_j)v_j)\rangle} \dint \mu_{\SS^{n-2}} (v_1) \cdots \dint  \mu_{\SS^{n-2}} (v_n)\\
	&\qquad \le (1+\delta)^{\frac{n(n+3)}{2}}\, 2^{-n}\, z^{-n}\, f(x(u))^{n}\,  \int\limits_{\SS^{n-2}} \cdots \int\limits_{\SS^{n-2}} (\text{vol}_{n-1}([r_2(v_1) v_1,\ldots,r_2(v_n)v_n]))^2\\
	&\qquad \qquad \times \prod_{j=1}^{n} r_2(v_j)^{n-1}\, \dint \mu_{\SS^{n-2}} (v_1) \cdots \dint  \mu_{\SS^{n-2}} (v_n),
	\end{align*}   
	where again $r_2(v) = b_2(v)^{-\frac{1}{2}} z^{\frac{1}{2}}$.
Define the ellipsoid $E$ as the $(n-1)$-dimensional convex body with radial function $b_2(v)^{-\frac{1}{2}}$, i.e., $E$ is the intersection of $Q_2^{(x(u))}$ with the hyperplane $z=1$. Thus, since $\text{vol}_{n-1}$ is homogeneous, the integral appearing in the latter expression can be rewritten as an integral where the random points are chosen in the interior of $E$ according to the uniform distribution. That is, 
\begin{align*}
&\int\limits_{\SS^{n-2}} \cdots \int\limits_{\SS^{n-2}} (\text{vol}_{n-1}([r_2(v_1) v_1,\ldots,r_2(v_n)v_n]))^2\, \prod_{j=1}^{n} r_2(v_j)^{n-1}\, \dint \mu_{\SS^{n-2}} (v_1) \cdots \dint  \mu_{\SS^{n-2}} (v_n)\\
&\qquad = z^{\frac{n(n-1)}{2}}\, \int\limits_{\SS^{n-2}} \cdots \int\limits_{\SS^{n-2}} \int\limits_0^{b_2(u_1)^{- \frac{1}{2}}} \cdots \int\limits_0^{b_2(u_n)^{-\frac{1}{2}}} (\text{vol}_{n-1}([b_2(u_1)^{- \frac{1}{2}} z^{1 \over 2} u_1,\ldots,b_2(u_n)^{- \frac{1}{2}} z^{1\over 2} u_n]))^2\\
&\qquad \qquad \times \prod_{j=1}^{n} ((n-1)t_j^{n-2})\, \dint t_1 \cdots \dint t_n \dint \mu_{\SS^{n-2}}(u_1) \cdots \dint \mu_{\SS^{n-2}}(u_n)\\
&\qquad =  z^{\frac{n(n-1)}{2} + n-1}\, \int\limits_{\SS^{n-2}} \cdots \int\limits_{\SS^{n-2}} \int\limits_0^{b_2(u_1)^{- \frac{1}{2}}} \cdots \int\limits_0^{b_2(u_n)^{-\frac{1}{2}}} (\text{vol}_{n-1}([b_2(u_1)^{- \frac{1}{2}} u_1,\ldots,b_2(u_n)^{- \frac{1}{2}} u_n]))^2\\
&\qquad \qquad \times \prod_{j=1}^{n} ((n-1)t_j^{n-2})\, \dint t_1 \cdots \dint t_n \dint \mu_{\SS^{n-2}}(u_1) \cdots \dint \mu_{\SS^{n-2}}(u_n)\\
&\qquad =  z^{\frac{n^2 + n - 2}{2}}\, (n-1)^n\, \int\limits_{E} \cdots \int\limits_{E} (\text{vol}_{n-1}([\tilde{x}_1,\ldots,\tilde{x}_n]))^2 \dint x_1 \cdots \dint x_n,
\end{align*}     
where $\tilde{x}_i$ is the projection of the point $x_i$ onto the boundary of $E$, i.e., 
\begin{align*}
\tilde{x}_i = \frac{x_i}{\left\|x_i\right\|}\,  r_E\left(\frac{x_i}{\left\|x_i\right\|}\right).
\end{align*}
Here, $r_E$ is the radial function of $E$ and $\| \cdot\|$ is the Euclidean norm, with $0$ at the center of $E$.
The random elements $\dint x_i$ as well as $\text{vol}_{n-1}$ are homogeneous and invariant with respect to volume preserving affinities acting in the affine subspace $\{z=1\}$. Moreover, since the volume of $E$ equals $2^{\frac{n-1}{2}} \kappa_K(x(u))^{-\frac{1}{2}} \kappa_{n-1}$, we get by first transforming the ellipsoid $E$ into the Euclidean ball $\BB^{n-1}$ (using a suitable affinity),  then again rewriting the integral as an integral  over the sphere $\SS^{n-2}$ and finally using Lemma \ref{lem:Miles}, 
\begin{align*}
& z^{\frac{n^2 + n - 2}{2}}\, (n-1)^n\, \int\limits_{E} \cdots \int\limits_{E} (\text{vol}_{n-1}([\tilde{x}_1,\ldots,\tilde{x}_n]))^2 \dint x_1 \cdots \dint x_n\\
&\qquad =  z^{\frac{n^2 + n - 2}{2}}\, \left(2^{\frac{n-1}{2}} \kappa_K(x(u))^{-\frac{1}{2}}\right)^2 \, \left(2^{\frac{n-1}{2}} \kappa_K(x(u))^{-\frac{1}{2}}\right)^n\\
&\qquad \qquad \times \int\limits_{\SS^{n-2}} \cdots \int\limits_{\SS^{n-2}} (\text{vol}_{n-1}([x_1,\ldots,x_n]))^2 \dint \mu_{\SS^{n-2}} (x_1) \cdots \dint \mu_{\SS^{n-2}} (x_n)\\
&\qquad =  z^{\frac{n^2 + n - 2}{2}}\, 2^{\frac{n^2 + n -2}{2}}\, \kappa_K(x(u))^{-\frac{n}{2} - 1}\, \frac{n \  \omega_{n-1}^n}{(n-1)!\, (n-1)^{n-1}}\, .
\end{align*}
Combining the above calculations yields that for all sufficiently small $\delta > 0$ it holds that 
\begin{align*}
&\int\limits_{\partial K \cap H} \cdots \int\limits_{\partial K \cap H} (\text{vol}_{n-1}([x_1,\ldots,x_n]))^2 \prod_{j=1}^{n} l_{H}(x_j) \dint \PP_{f_{\partial K \cap H}}(x_1) \cdots \dint \PP_{f_{\partial K \cap H}} (x_n)\\
&\qquad \le  (1+\delta)^{\frac{n(n+3)}{2}}\, 2^{\frac{n^2 - n - 2}{2}}\,  z^{\frac{n^2  - n - 2}{2}}\,\frac{n\, \omega_{n-1}^n}{(n-1)!\, (n-1)^{n-1}}\, f(x(u))^{n}\, \kappa_K(x(u))^{-\frac{n}{2} - 1}\\
&\qquad \qquad  + \delta O(z^{\frac{n^2 - n - 2}{2}}),
\end{align*}
where the constant in $O(\cdot)$ can be chosen independently of $x(u)$ and $\delta$. This proves the lemma. 
\end{proof}     

Now, we further analyze the expression appearing in Lemma \ref{Schritt1}. In order to do this, we put $s := \PP_f(\partial K \cap H^-)$, i.e., $\PP_f(\partial K \cap H^+) = 1-s$. The results stated in \cite[equation (71)]{Reitzner} also imply the following estimates.

\begin{lem}[\cite{Reitzner}]\label{lem:Reitzner2}
	For all sufficiently small $\delta > 0$, it holds that 
	\begin{align}\label{s}
	\begin{split}
	&(1+\delta)^{-n}\, 2^{\frac{n-1}{2}}\, f(x(u))\, \kappa_K(x(u))^{- \frac{1}{2}}\, \kappa_{n-1}\, z^{\frac{n-1}{2}}\\ 
	&\qquad \le s \le (1+\delta)^{n+1}\, 2^{\frac{n-1}{2}}\, f(x(u))\, \kappa_K(x(u))^{- \frac{1}{2}}\, \kappa_{n-1}\, z^{\frac{n-1}{2}}
	\end{split}
	\end{align}
	Therefore,
	\begin{align}\label{z}
	z \le (1+\delta)^{\frac{2n}{n-1}}\, \frac{\kappa_K(x(u))^{1\over n-1}\, (n-1)^{\frac{2}{n-1}}}{2\, f(x(u))^{2\over n-1}\, \omega_{n-1}^{2\over n-1}}\, s^{2 \over n-1}
	\end{align}
	and
	\begin{align}\label{dz}
	\frac{\dint z}{\dint s} \le (1+\delta)^{n}\, \frac{\kappa_K(x(u))^{1\over 2}\, 2^{-\frac{n-3}{2}}}{f(x(u))\, \omega_{n-1}}\, z^{- \frac{n-3}{2}}.
	\end{align}
\end{lem}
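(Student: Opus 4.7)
The plan is to derive the two-sided estimate \eqref{s} first by computing the surface integral $s = \int_{\partial K \cap H^-} f(x) \dint \mu_{\partial K}$ asymptotically in the regime where $z := h_K(u) - h$ is small, and then to obtain \eqref{z} and \eqref{dz} as purely algebraic consequences. Since the cap $\partial K \cap H^-$ shrinks to $\{x(u)\}$ as $z \to 0$, for sufficiently small $z$ it lies inside the neighborhood $U^\lambda$ of Lemma \ref{lem:Reitzner}, so the local estimates \eqref{radialKH}, \eqref{radial2} and \eqref{radial3} apply, with $\lambda$ uniform in $x(u)$ by the $C^2_+$ regularity and compactness of $\partial K$.

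I would parametrize the cap by cylindrical coordinates centered at $x(u)$: write each boundary point as $(rv, g(rv))$ with $v \in \SS^{n-2}$ and $r \in [0, r(v)]$, where $g$ is the convex function representing $\partial K$ over its tangent plane at $x(u)$ and $r(v)$ is the radial function of $\partial K \cap H$ bounded via \eqref{radialKH}. The surface element factors as $\dint \mu_{\partial K} = \sqrt{1 + |\nabla g|^2}\, r^{n-2} \dint r\, \dint \mu_{\SS^{n-2}}(v)$, and the Jacobian $\sqrt{1+|\nabla g|^2}$ equals $1 + O(z)$ near $x(u)$, so it can be absorbed into a $(1+\delta)^{\pm 1}$ factor. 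Using \eqref{radial3} to pull $f$ out as $(1+\delta)^{\pm 1} f(x(u))$ and integrating $r^{n-2}$ from $0$ to $r(v)$, the integral collapses to
\begin{equation*}
s = (1+\delta)^{\pm O(n)}\, f(x(u))\, \frac{z^{(n-1)/2}}{n-1} \int_{\SS^{n-2}} b_2(v)^{-(n-1)/2} \dint \mu_{\SS^{n-2}}(v).
\end{equation*}
The remaining integral equals $(n-1)\,\text{vol}_{n-1}(E)$, where $E$ is the ellipsoid appearing in Lemma \ref{Schritt2}, whose volume is $2^{(n-1)/2} \kappa_{n-1}\, \kappa_K(x(u))^{-1/2}$; this yields \eqref{s}.

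For \eqref{z}, I would solve the lower bound in \eqref{s} for $z^{(n-1)/2}$, raise both sides to the power $2/(n-1)$, and rewrite $\kappa_{n-1}$ via $\omega_{n-1} = (n-1)\kappa_{n-1}$ to match the stated form. For \eqref{dz}, the cleanest route is to note that \eqref{s} gives $s \asymp C(u)\, z^{(n-1)/2}$ with $C(u) = 2^{(n-1)/2} f(x(u))\kappa_K(x(u))^{-1/2}\kappa_{n-1}$, whence $\dint z/\dint s = C(u)^{-1} \cdot \frac{2}{n-1}\, z^{-(n-3)/2}$ up to $(1+\delta)$ factors; using $\omega_{n-1} = (n-1)\kappa_{n-1}$ converts this into the stated bound.

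The main technical obstacle is the bookkeeping of the $(1+\delta)$ powers needed to achieve precisely the exponents $-n$ and $n+1$ in \eqref{s}: the density contributes one factor, the $(n-1)$-th power of the radial function provides $(n-1)/2$ factors, the Jacobian contributes another, and the implicit replacement of $\partial K \cap H^-$ by the corresponding paraboloid cap must be quantified as well. The other, more conceptual, subtlety is that every $O(\cdot)$ and $(1+\delta)$ constant must be shown to be uniform in $x(u)\in \partial K$, which is where the compactness and $C^2_+$ hypothesis on $K$ come in.
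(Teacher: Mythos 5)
The paper does not prove this lemma itself — it simply cites Reitzner's equation (71) and states that the three displays follow from it. So there is no "paper proof" to compare against; what can be assessed is whether your outline is a valid reconstruction.

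Your derivation of \eqref{s} is sound: parametrize the cap $\partial K\cap H^-$ by $(rv,g(rv))$ over $\SS^{n-2}\times[0,r(v)]$, apply \eqref{radial3} to pull out $f(x(u))$, bound $r(v)^{n-1}$ via \eqref{radialKH}, absorb the Jacobian $\sqrt{1+|\nabla g|^2}=1+O(z)$ into a $(1+\delta)$ factor, and recognize $\int_{\SS^{n-2}} b_2(v)^{-(n-1)/2}\dint\mu_{\SS^{n-2}}=(n-1)\vol_{n-1}(E)=(n-1)2^{(n-1)/2}\kappa_{n-1}\kappa_K(x(u))^{-1/2}$. Passing from \eqref{s} to \eqref{z} is then pure algebra together with $\omega_{n-1}=(n-1)\kappa_{n-1}$, and this is fine.

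The gap is in your treatment of \eqref{dz}. You propose to "differentiate" the two-sided estimate $s\asymp C(u)\,z^{(n-1)/2}$, but a two-sided pointwise bound on a function does not in general yield any bound on its derivative — one can sandwich a wildly oscillating $s(z)$ between two smooth envelopes. The fix, and what \eqref{radial2} is actually there for (you never invoke it), is to compute $\dint s/\dint z$ directly. By Leibniz (or the co-area formula applied to $x\mapsto\langle x,u\rangle$ on $\partial K$),
\begin{align*}
\frac{\dint s}{\dint z}
= \int\limits_{\SS^{n-2}} f\bigl(r(v)v\bigr)\,\frac{l_H\bigl(r(v)v\bigr)\,r(v)^{n-2}}{\langle v, N_{K\cap H}(r(v)v)\rangle}\,\dint\mu_{\SS^{n-2}}(v),
\end{align*}
and now \eqref{radial3}, \eqref{radial2} and \eqref{radialKH} give a \emph{two-sided} bound on the integrand, hence a two-sided bound on $\dint s/\dint z$; inverting the lower bound and converting $\kappa_{n-1}$ to $\omega_{n-1}$ produces exactly \eqref{dz}. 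In other words, you should bound the derivative of the integral by bounding the integrand that represents the derivative, not by differentiating the bound on the integral.

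Two minor bookkeeping remarks: your count "$1+(n-1)/2+1$" of $(1+\delta)$ factors is somewhat ad hoc and does not automatically match the exponents $-n$ and $n+1$ claimed, but since all exponents here are one-sided overestimates the discrepancy is harmless; and uniformity of $\lambda$ and of the $O(\cdot)$ constants in $x(u)$ does indeed come from $C^2_+$ regularity and compactness of $\partial K$, as you say, which is worth stating but not a difficulty.
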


Using once more results from \cite{Reitzner}, we continue the proof of the main theorem as follows.

\begin{lem}\label{Schritt2a}
For sufficiently large $N$ and sufficiently small $\delta > 0$, we have 
\begin{align*}
\EE[\text{vol}_n((1-c)K \Delta P_N)] \le I + II,
\end{align*}
where 
\begin{align*}
I &:= (1+\delta)^{\frac{3n^2 + 3n}{2}}\, a\, \binom{N}{n}\, n\, \int\limits_{\SSn}  \kappa_K(x(u))^{-1} \int\limits_{0}^{1} (1-s)^{N-n}\, s^{n-1}\, (z-ch_K(u))\, \dint s \dint \mu_{\SSn}(u)
\end{align*}
and 
\begin{align*}
II &:= (1+\delta)^{\frac{3n^2 + 3n}{2}}\, a\, \binom{N}{n}\, n\, \int\limits_{\SSn} \kappa_K(x(u))^{-1}\\
&\qquad \qquad \times \int\limits_{0}^{s(c h_K(u))} (1-s)^{N-n}\, s^{n-1}\, (ch_K(u) - z)\, \dint s \dint \mu_{\SSn}(u).
\end{align*}
Here, $z=z(s)$ and 
\begin{align*}
s(ch_K(u)) := \int\limits_{\partial K \cap H^-} f(x) \dint \mu_{\partial K}, 
\end{align*}
where $H$ is the unique hyperplane with unit normal vector $u\in \SSn$ and distance $(1-c)h_K(u)$ from the origin and $H^-$ the corresponding half space that does not contain the origin.  
\end{lem}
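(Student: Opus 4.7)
The plan is to take the bound of Lemma~\ref{Schritt1} as the starting point, substitute the estimate of Lemma~\ref{Schritt2} into its innermost integral, and then perform two successive changes of variable, first $h\mapsto z:=h_K(u)-h$ and then $z\mapsto s$ via $s=\PP_f(\partial K\cap H^-)$, using Lemma~\ref{lem:Reitzner2}. The net effect should be to reduce everything to an integral over $u\in\SSn$ and $s$ with the weight $(1-s)^{N-n} s^{n-1}$ appearing in $I$ and $II$.

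After inserting Lemma~\ref{Schritt2}, the integrand carries a factor $z^{(n^2-n-2)/2}$. Combining this with the $z^{-(n-3)/2}$ coming from the Jacobian bound (\ref{dz}) produces $z^{(n-1)^2/2}$, and applying the upper bound (\ref{z}) to that power converts it, up to explicit factors in $\kappa_K(x(u))$, $f(x(u))$, $\omega_{n-1}$ and $(n-1)$, into $s^{n-1}$. Tracking all constants, the powers of $2$, $\omega_{n-1}$, $f(x(u))$ and $(n-1)$ cancel exactly; the surviving $\kappa_K$-power collapses to $\kappa_K(x(u))^{-1}$; and the three contributions of $(1+\delta)$ coming from Lemma~\ref{Schritt2}, from (\ref{dz}), and from (\ref{z}) combine, with respective exponents $n(n+3)/2$, $n$ and $n(n-1)$, into the total exponent $3n(n+1)/2=(3n^2+3n)/2$, precisely as required. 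The $\delta\,O(z^{(n^2-n-2)/2})$ remainder in Lemma~\ref{Schritt2} is treated by the same substitutions and is absorbed into the leading bound by taking $\delta$ sufficiently small.

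To split the resulting $s$-integral into the two pieces $I$ and $II$, I rewrite the factor $\max\{0,(1-c)h_K(u)-h\}=\max\{0,z-ch_K(u)\}$ by means of the elementary identity
\begin{align*}
\max\{0,z-ch_K(u)\}=(z-ch_K(u))+(ch_K(u)-z)\,\mathds{1}_{\{z\le ch_K(u)\}}.
\end{align*}
Since $z(s)$ is strictly increasing in $s$ and $z(s(ch_K(u)))=ch_K(u)$, the indicator $\mathds{1}_{\{z\le ch_K(u)\}}$ coincides with $\mathds{1}_{\{s\le s(ch_K(u))\}}$, producing exactly the integration range of $II$. For the first summand, the range $[0,s(\epsilon)]$ may be enlarged to $[0,1]$ because for $s>s(\epsilon)>s(ch_K(u))$ one has $z>ch_K(u)$, so only non-negative contributions are added; this yields $I$.

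The main obstacle is the bookkeeping of constants through this chain of substitutions. In particular, each inequality from Lemma~\ref{lem:Reitzner2} must be applied in the correct direction so that every step really produces an upper bound, and the various $(1+\delta)$-contributions must be checked to combine into the stated exponent $(3n^2+3n)/2$. Once these arithmetic details are handled and the above splitting of $\max\{0,z-ch_K(u)\}$ is applied, the asserted bound $\EE[\text{vol}_n((1-c)K\Delta P_N)]\le I+II$ follows.
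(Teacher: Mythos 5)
Your proposal is correct and follows essentially the same route as the paper: insert Lemma~\ref{Schritt2} into the bound of Lemma~\ref{Schritt1}, substitute $z=h_K(u)-h$ and then $z\mapsto s$ via \eqref{dz} and \eqref{z}, verify that the powers of $2$, $\omega_{n-1}$, $f$ and $(n-1)$ cancel to leave $\kappa_K^{-1}s^{n-1}$ with the $(1+\delta)$-exponent $(3n^2+3n)/2$, and finally decompose $\max\{0,z-ch_K(u)\}$ over $[0,1]$ to obtain $I+II$. The paper phrases the last step as restricting to $[s(ch_K(u)),1]$ and then writing $\int_{s(ch_K(u))}^1=\int_0^1-\int_0^{s(ch_K(u))}$, which is algebraically identical to your indicator-function identity, and both treat the $\delta\,O(z^{(n^2-n-2)/2})$ remainder by the same "$\delta$ arbitrarily small" observation.
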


\begin{proof}[Proof of Lemma \ref{Schritt2a}]
Observe first that \begin{align*}
\max \{0, ((1-c)h_K(u) - h)\} = 0 \quad \text{if} \quad h > (1-c)h_K(u).
\end{align*}
This,  Lemma \ref{Schritt1},  Lemma \ref{Schritt2} and the substitution $z = h_K(u) - h$ then yield that  
\begin{align*}
&\EE[\text{vol}_n((1-c)K \Delta P_N)]\\
&\quad \le (1+\delta)^{\frac{n(n+3)}{2}}\, a\, 2^{\frac{n^2-n-2}{2}}\, \binom{N}{n}\, \frac{n\, \omega_{n-1}^n}{(n-1)^{n-1}} \int\limits_{\SSn} f(x(u))^n\, \kappa_K(x(u))^{-\frac{n}{2}-1}\\
&\qquad \qquad \times \int\limits_{h_K(u) - \epsilon}^{(1-c)h_K(u)} \left(\PP_f(\partial K \cap H^+)\right)^{N-n} z^{\frac{n^2 - n - 2}{2}} ((1-c)h_K(u) - h) \dint h \dint \mu_{\SSn}(u)\\
&\qquad + \delta \binom{N}{n} (n-1)! \int\limits_{\SSn} \int\limits_{h_K(u) - \epsilon}^{(1-c)h_K(u)} \left(\PP_f(\partial K \cap H^+)\right)^{N-n} O(z^{\frac{n^2 - n - 2}{2}})\\
&\qquad \qquad \times ((1-c)h_K(u) - h) \dint h \dint \mu_{\SSn}(u)\\
&\quad = (1+\delta)^{\frac{n(n+3)}{2}}\, a\, 2^{\frac{n^2-n-2}{2}}\, \binom{N}{n}\, \frac{n\, \omega_{n-1}^n}{(n-1)^{n-1}}\, \int\limits_{\SSn} f(x(u))^n\, \kappa_K(x(u))^{-\frac{n}{2}-1}\\
&\qquad \qquad \times \int\limits_{ch_K(u)}^{\epsilon} \left(\PP_f(\partial K \cap H^+)\right)^{N-n} z^{\frac{n^2 - n - 2}{2}} (z - c h_K(u)) \dint z \dint \mu_{\SSn}(u)\\
&\qquad+ \delta \binom{N}{n} (n-1)! \int\limits_{\SSn} \int\limits_{ch_K(u)}^{\epsilon} \left(\PP_f(\partial K \cap H^+)\right)^{N-n} O(z^{\frac{n^2 - n - 2}{2}}) (z -c h_K(u)) \dint z \dint \mu_{\SSn}(u).
\end{align*}
As the later calculations will show, the order of both summands is $N^{- \frac{2}{n-1}}$. Since $\delta$ is arbitrarily small, it is enough to consider the first summand in what follows.

We use  \eqref{dz} and then \eqref{z} to change from $z^{\frac{(n-1)^2}{2}}$ to $s^{n-1}$ and obtain that, for sufficiently large $N$, 
\begin{align*}
&\EE[\text{vol}_n((1-c)K \Delta P_N)]\\
&\quad \le (1+\delta)^{\frac{n(n+3)}{2} + n}\, a\, 2^{\frac{n^2-n-2}{2}}\, 2^{-\frac{n-3}{2}}\, \binom{N}{n}\, \frac{n\, \omega_{n-1}^n}{(n-1)^{n-1}}\, \int\limits_{\SSn} f(x(u))^{n-1}\, \kappa_K(x(u))^{-\frac{n}{2}-\frac{1}{2}}\\
&\qquad \qquad \times \int\limits_{s(ch_K(u))}^{1} (1-s)^{N-n}\, z^{\frac{n^2 - n - 2 - n + 3}{2}} (z - c h_K(u)) \dint s \dint \mu_{\SSn}(u)\\
&\quad \le (1+\delta)^{\frac{n^2 + 5n}{2}}\, a\, 2^{\frac{n^2-2n+1}{2}}\,  \binom{N}{n}\, \frac{n\, \omega_{n-1}^{n-1}}{(n-1)^{n-1}}\, \int\limits_{\SSn} f(x(u))^{n-1}\, \kappa_K(x(u))^{-\frac{n}{2}-\frac{1}{2}}\\
&\qquad \qquad \times \int\limits_{s(ch_K(u))}^{1} (1-s)^{N-n}\, z^{\frac{(n-1)^2}{2}} (z - c h_K(u)) \dint s \dint \mu_{\SSn}(u)\\
&\quad \le (1+\delta)^{\frac{n^2 + 5n}{2} + n(n-1)}\, a\, 2^{\frac{(n-1)^2}{2}}\, 2^{-\frac{(n-1)^2}{2}}\,   \binom{N}{n}\, n\, \int\limits_{\SSn}  \kappa_K(x(u))^{-1}\\
&\qquad \qquad \times \int\limits_{s(ch_K(u))}^{1} (1-s)^{N-n}\, s^{n-1}\, (z - c h_K(u)) \dint s \dint \mu_{\SSn}(u)\\
&\quad \le  (1+\delta)^{\frac{3n^2 + 3n}{2}}\, a\, \binom{N}{n}\, n\, \int\limits_{\SSn}  \kappa_K(x(u))^{-1}\\
&\qquad \qquad \times \int\limits_{s(ch_K(u))}^{1} (1-s)^{N-n}\, s^{n-1}\, (z - c h_K(u)) \dint s \dint \mu_{\SSn}(u)\\
&\quad = (1+\delta)^{\frac{3n^2 + 3n}{2}}\, a\, \binom{N}{n}\, n\, \int\limits_{\SSn}  \kappa_K(x(u))^{-1} \int\limits_{0}^{1} (1-s)^{N-n}\, s^{n-1}\, (z-ch_K(u))\, \dint s \dint \mu_{\SSn}(u)\\
&\qquad + (1+\delta)^{\frac{3n^2 + 3n}{2}}\, a\, \binom{N}{n}\, n\, \int\limits_{\SSn} \kappa_K(x(u))^{-1}\\
&\qquad \qquad \times \int\limits_{0}^{s(c h_K(u))} (1-s)^{N-n}\, s^{n-1}\, (ch_K(u) - z)\, \dint s \dint \mu_{\SSn}(u).
\end{align*} 
This proves the lemma in view of the definitions of  $I$ and $II$.
\end{proof}

We start with the first term.  

\begin{lem}\label{Schritt4}
For sufficiently large $N$ and sufficiently small $\delta > 0$, we have that
\begin{align*}
I \le  (1+\delta)^{\frac{3n^2 + 3n}{2}}\, a\, N^{- \frac{2}{n-1}}\, \int\limits_{\partial K}  \frac{\kappa_K(x)^{\frac{1}{n-1}}}{f(x)^{2\over n-1}} \dint \mu_{\partial K}(x),
\end{align*}
where $a\in (0,\infty)$ is an absolute constant. 
\end{lem}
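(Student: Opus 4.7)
The plan is to reduce the expression $I$ to a Beta-function integral, invoke Stirling, and then convert the outer spherical integral to a surface integral over $\partial K$ via the Gauss map. First, since $c h_K(u) \ge 0$, the pointwise bound $z - c h_K(u) \le z$ lets me drop the shifted term from the integrand at no cost for an upper bound. I would then apply the estimate \eqref{z} from Lemma \ref{lem:Reitzner2} to replace $z$ by $C(x(u),\delta)\, s^{2/(n-1)}$, where the prefactor absorbs the curvature, the density, and the factor $(1+\delta)^{2n/(n-1)}$.

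At this stage the inner integral becomes a Beta function,
\begin{align*}
\int_0^1 (1-s)^{N-n}\, s^{n-1+\frac{2}{n-1}}\, \dint s = \frac{\Gamma\bigl(n+\frac{2}{n-1}\bigr)\, \Gamma(N-n+1)}{\Gamma\bigl(N+1+\frac{2}{n-1}\bigr)},
\end{align*}
and its product with $\binom{N}{n}$ collapses to $\Gamma(N+1)/\Gamma(N+1+\frac{2}{n-1})$ up to a dimensional constant. Stirling's formula then shows this product is at most $a\, N^{-2/(n-1)}$ for sufficiently large $N$, which is where the desired asymptotic rate is produced.

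Finally, I would transfer the integral from $\SSn$ to $\partial K$ via the Gauss map: since $K$ is $C^2_+$, $\dint \mu_{\SSn}(u) = \kappa_K(x(u))\, \dint \mu_{\partial K}(x(u))$. The combination $\kappa_K(x(u))^{-1}\, \kappa_K(x(u))^{1/(n-1)}\, f(x(u))^{-2/(n-1)}$ (the $\kappa_K^{-1}$ coming from the definition of $I$ and $\kappa_K^{1/(n-1)}\, f^{-2/(n-1)}$ from the bound on $z$) then becomes $\kappa_K(x)^{1/(n-1)}/f(x)^{2/(n-1)}$, which matches the integrand in the claim. The $(1+\delta)^{2n/(n-1)}$ picked up at the first step is bounded for small $\delta$ and can be absorbed into $a$.

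Each step is routine; no genuinely hard obstacle is expected. The main bookkeeping task is tracking which dimensional factors---the powers of $\omega_{n-1}$, $n-1$, the $\Gamma$-values from Stirling, and the extra $(1+\delta)$-power---legitimately belong to $a$ rather than contributing to the $N^{-2/(n-1)}$ asymptotic rate.
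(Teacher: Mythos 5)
The step ``drop $-c\,h_K(u)$ since $z - c\,h_K(u)\le z$'' is where the proof fails, and the failure is quantitative, not cosmetic. After you apply the upper bound \eqref{z} for $z$, convert the Beta integral, and carry out the Gauss-map change of variables, the leading constant you are left with (before absorbing anything into $a$) is
\begin{align*}
\frac{n}{2}\,\binom{N}{n}\,\frac{(n-1)^{\frac{2}{n-1}}}{\omega_{n-1}^{\frac{2}{n-1}}}\,
\frac{\Gamma(N-n+1)\,\Gamma\!\left(n+\tfrac{2}{n-1}\right)}{\Gamma\!\left(N+1+\tfrac{2}{n-1}\right)}
\ \sim\ \frac{1}{2}\,N^{-\frac{2}{n-1}}\,\frac{(n-1)^{\frac{2}{n-1}}}{\omega_{n-1}^{\frac{2}{n-1}}},
\end{align*}
and since $\omega_{n-1}^{2/(n-1)}\sim 1/n$ while $(n-1)^{2/(n-1)}\le 2$, the factor $\frac{(n-1)^{2/(n-1)}}{\omega_{n-1}^{2/(n-1)}}$ grows linearly in $n$. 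This is not an absolute constant and cannot be absorbed into $a$; the lemma, as stated, claims $a$ dimension-free, which is the entire point of the construction (the gain of a factor of dimension over inscribed polytopes).

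The paper avoids this by \emph{not} dropping $-c\,h_K(u)$. It lower-bounds $c$ via \eqref{AbschätzungfürC}, which, after the same Beta and Gauss-map manipulations, produces a second term proportional to $\left(1-\tfrac{1}{n}\right)\frac{(n-1)\left(n+\tfrac{2}{n-1}\right)}{n(n+1)}$ that is subtracted from $(1+\delta)^{2n/(n-1)}$. For $\delta$ small this bracket is of size $O(\delta) + O(1/n)$, and its $O(1/n)$ part is precisely what is needed to cancel the $\sim n$ coming from $\omega_{n-1}^{-2/(n-1)}$. The delicate choice of $c$ in \eqref{choice:c}--\eqref{AbschätzungfürC} is tuned to make this cancellation happen; discarding the $c\,h_K(u)$ term removes the mechanism that keeps $a$ absolute. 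Aside from this, your outline of the Beta integral, Stirling, and Gauss-map steps matches the paper's computation, but the proof as proposed establishes only a bound with a constant of order $n$, not the claimed absolute one.
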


\begin{proof}[Proof of Lemma \ref{Schritt4}]
We apply  \eqref{z} and  \eqref{AbschätzungfürC}, to get that for all sufficiently small $\delta > 0$ and sufficiently large $N$,
\begin{align*}
I & \le (1+\delta)^{\frac{3n^2 + 3n}{2}}\, a\,  \binom{N}{n}\, \frac{n}{2}\, \frac{(n-1)^{\frac{2}{n-1}}}{\omega_{n-1}^{2\over n-1}} \  \Bigg[
\  (1+\delta)^\frac{2n}{n-1} \int\limits_{\SSn}  \frac{\kappa_K(x(u))^{-1 + \frac{1}{n-1}}}{f(x(u))^{2\over n-1}} \  \dint \mu_{\SSn}(u)\\
&\qquad \qquad  \times   \int\limits_{0}^{1} (1-s)^{N-n}\, s^{n-1 + \frac{2}{n-1}}\, \dint s \\
&\qquad - \left(1-\frac{1}{n}\right)\, N^{-\frac{2}{n-1}}\, \frac{(n-1)\, \Gamma\left(n + 1 + \frac{2}{n-1}\right)}{ (n+1)!}
\frac{1}{n \text{vol}_n(K)}\, \int\limits_{\partial K} \frac{\kappa_K(x)^{1\over n-1}}{f(x)^{2\over n-1}} \dint \mu_{\partial K}(x)\\
&\qquad \qquad \times \int\limits_{\SSn} h_K(u)  \kappa_K(x(u))^{-1} \dint \mu_{\SSn}(u) \int\limits_{0}^{1} (1-s)^{N-n}\, s^{n-1}\, \dint s  \  \Bigg].
\end{align*}
For $u \in \SSn$, let $x=x(u) \in \partial K$ be such that $N_K(x)=u$. Then, the relation  $\dint \mu_{\SSn}(u) = \kappa_K(x) \dint \mu_{\partial K}(x)$ gives  that
\begin{align}\label{nvol}
n \text{vol}_n(K)= \int\limits_{\partial K} \langle x, N_K(x) \rangle \dint \mu_{\partial K}(x)=\int\limits_{\partial K} h_K(u(x)) \dint \mu_{\partial K}(x) = \int\limits_{\SSn} \frac{h_K(u) }{ \kappa_K(x(u))} \dint \mu_{\SSn}(u)
\end{align}
and that
$$
\int\limits_{\SSn}  \frac{\kappa_K(x(u))^{-1 + \frac{1}{n-1}}}{f(x(u))^{2\over n-1}}\dint \mu_{\SSn}(u) = \int\limits_{\partial K} \frac{\kappa_K(x)^{ \frac{1}{n-1}}}{f(x)^{2\over n-1}}\dint \mu_{\partial K}(x).
$$
We use those, together with the definition of the Beta function next and arrive at
\begin{align*}
I &\leq (1+\delta)^{\frac{3n^2 + 3n}{2}}\, a\,  \binom{N}{n}\, \frac{n}{2}\, \frac{(n-1)^{\frac{2}{n-1}}}{\omega_{n-1}^{2\over n-1}} \ \int\limits_{\partial K} \frac{\kappa_K(x)^{ \frac{1}{n-1}}}{f(x)^{2\over n-1}}\dint \mu_{\partial K}(x) \\ 
&\qquad \qquad \times \Bigg[ \  (1+\delta)^\frac{2n}{n-1} \frac{\Gamma(N-n+1) \Gamma\left(n+ \frac{2}{n-1}\right)}{\Gamma\left(N+1+\frac{2}{n-1}\right)} \\
&\qquad -  \left(1-\frac{1}{n}\right)\, N^{-\frac{2}{n-1}}\, \frac{(n-1) \Gamma\left(n + 1 + \frac{2}{n-1}\right)}{ (n+1)!} \  \frac{\Gamma(N-n+1) \Gamma\left(n\right)}{\Gamma\left(N+1\right)} \  \Bigg]\\
&= (1+\delta)^{\frac{3n^2 + 3n}{2}}\, a\, \frac{n}{2} \  \binom{N}{n}\, \  \frac{(n-1)^{\frac{2}{n-1}}}{\omega_{n-1}^{2\over n-1}} \ \int\limits_{\partial K} \frac{\kappa_K(x)^{ \frac{1}{n-1}}}{f(x)^{2\over n-1}}\dint \mu_{\partial K}(x) \  \frac{\Gamma(N-n+1) \Gamma\left(n+ \frac{2}{n-1}\right)}{\Gamma\left(N+1+\frac{2}{n-1}\right)}  \\
& \qquad \qquad  \times  \Bigg[ \  (1+\delta)^\frac{2n}{n-1}  - \left(1-\frac{1}{n}\right)\, N^{-\frac{2}{n-1}}\, \frac{(n-1) \left(n  + \frac{2}{n-1}\right)}{n (n+1)} \  \frac{\Gamma\left(N+1+ \frac{2}{n-1}\right) }{\Gamma\left(N+1\right)} \  \Bigg],
\end{align*}
where in the last equality  we have also used that 
\begin{align*}
\Gamma\left(n + 1 + \frac{2}{n-1}\right) = \Gamma\left(n + \frac{2}{n-1}\right) \left(n + \frac{2}{n-1}\right).
\end{align*}
Now, observe  that  for sufficiently large $N$, 
\begin{align*}
\frac{\Gamma(N-n+1) \Gamma\left(n+ \frac{2}{n-1}\right)}{\Gamma\left(N+1+\frac{2}{n-1}\right)} \sim \frac{1}{\binom{N}{n} n N^{2\over n-1}} \quad \text{and } \quad \Gamma(N+1+\frac{2}{n-1}) \sim N^{2\over n-1} \Gamma\left(N+1\right).
\end{align*}
Thus, we obtain that  for sufficiently large $N$ and sufficiently small $\delta$,
\begin{align*}
 I &\leq (1+\delta)^{\frac{3n^2 + 3n}{2}}\, a\, N^{-\frac{2}{n-1}} \, \frac{(n-1)^{\frac{2}{n-1}}}{\omega_{n-1}^{2\over n-1}} \ \int\limits_{\partial K} \frac{\kappa_K(x)^{ \frac{1}{n-1}}}{f(x)^{2\over n-1}}\dint \mu_{\partial K}(x)  \\
&\qquad \times \Bigg[ \  (1+\delta)^\frac{2n}{n-1} - \left(1-\frac{1}{n}\right) \frac{ (n-1)\left(n  + \frac{2}{n-1}\right)}{n (n+1)} \ \Bigg]\\
&\leq (1+\delta)^{\frac{3n^2 + 3n}{2}}\, a\, N^{-\frac{2}{n-1}} \, \int\limits_{\partial K} \frac{\kappa_K(x)^{ \frac{1}{n-1}}}{f(x)^{2\over n-1}}\dint \mu_{\partial K}(x), 
\end{align*}
where in the last inequality we have also used that $\omega_{n-1}^{2\over n-1} \sim \frac{1}{n}$ and that $(n-1)^{2\over n-1} \le 2$.
\end{proof}

Now, we deal with the second summand in Lemma \ref{Schritt2a}. 
\newpage
\begin{lem}\label{Schritt5}
For sufficiently large $N$ and sufficiently small $\delta > 0$, we have that 
$$
II \leq    (1+\delta)^{\frac{3n^2 + 3n}{2}} a\, N^{-\frac{2}{n-1}} \, \int\limits_{\partial K} \frac{\kappa_K(x)^{ \frac{1}{n-1}}}{f(x)^{2\over n-1}}\dint \mu_{\partial K}(x),
$$
where $a\in (0,\infty)$ is an absolute constant.
\end{lem}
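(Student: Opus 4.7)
The plan is to exploit the fact that the range of integration in $s$ is very short (of order $N^{-1}$), so that crude bounds on the integrand are sufficient. The main observation is that on $[0, s(ch_K(u))]$ the quantity $z=z(s)$ is monotone in $s$ and satisfies $0 \le z \le ch_K(u)$, so in particular
\[
0 \le ch_K(u)-z \le ch_K(u).
\]
This reduces the inner integral to a manageable Beta-type expression.

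First I would drop the factor $ch_K(u)-z$ in favor of $ch_K(u)$ and enlarge the range of integration from $[0,s(ch_K(u))]$ to $[0,1]$, obtaining
\[
\int\limits_{0}^{s(ch_K(u))} (1-s)^{N-n}\, s^{n-1}\, (ch_K(u)-z)\,\dint s \;\le\; ch_K(u)\,B(n, N-n+1),
\]
where $B$ is the Beta function, so that $B(n,N-n+1) = (n-1)!(N-n)!/N!$. Substituting this into the definition of $II$ yields
\[
II \;\le\; (1+\delta)^{\frac{3n^2+3n}{2}}\,a\,\binom{N}{n}\,n\,B(n,N-n+1)\,c\,\int\limits_{\SSn}\frac{h_K(u)}{\kappa_K(x(u))}\,\dint\mu_{\SSn}(u).
\]
The combinatorial prefactor collapses nicely: a direct computation gives $\binom{N}{n}\,n\,B(n,N-n+1)=1$, so this prefactor disappears entirely.

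Next I would invoke the identity \eqref{nvol}, which rewrites
\[
\int\limits_{\SSn}\frac{h_K(u)}{\kappa_K(x(u))}\,\dint\mu_{\SSn}(u) \;=\; n\,\vol_n(K),
\]
reducing the bound to $II \le (1+\delta)^{\frac{3n^2+3n}{2}}\,a\,c\,n\,\vol_n(K)$. The final step is to substitute the asymptotic expression for $c$ from \eqref{c}: for sufficiently large $N$,
\[
c\,n\,\vol_n(K) \;\le\; a\,N^{-\frac{2}{n-1}}\,\int\limits_{\partial K}\frac{\kappa_K(x)^{1/(n-1)}}{f(x)^{2/(n-1)}}\,\dint\mu_{\partial K}(x),
\]
after absorbing the dimension-dependent constants into $a$, and this yields the claimed inequality.

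No step is really an obstacle here because the range of $s$ is so short that using the crude upper bound $ch_K(u)-z \le ch_K(u)$ and extending to the whole Beta integral costs nothing in the final order; the work done in Lemmas \ref{Schritt1} and \ref{Schritt2} has already extracted the relevant powers. The one thing to be careful about is checking that $z$ is indeed increasing in $s$ (so that $s \in [0,s(ch_K(u))]$ does correspond to $z \in [0,ch_K(u)]$), which follows because $s=\PP_f(\partial K\cap H^-)$ is monotone in the depth of the cap, hence monotone in $z=h_K(u)-h$.
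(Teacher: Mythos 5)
Your argument, while clean, proves a bound with a constant that grows linearly in the dimension $n$, not an absolute constant as the lemma asserts. Here is where it goes wrong. You discard the factor $(ch_K(u)-z)$ in favour of $ch_K(u)$, enlarge the integration interval to $[0,1]$, and then use the exact identity $\binom{N}{n}\,n\,B(n,N-n+1)=1$; this yields
\[
II \le (1+\delta)^{\frac{3n^2+3n}{2}}\,a\,c\,n\,\vol_n(K).
\]
To finish you substitute for $c$ via \eqref{c}, which reads
\[
c \sim N^{-\frac{2}{n-1}}\,\frac{(n-1)^{\frac{n+1}{n-1}}\Gamma\left(n+1+\frac{2}{n-1}\right)}{2(n+1)!\,\omega_{n-1}^{\frac{2}{n-1}}}\cdot\frac{1}{n\,\vol_n(K)}\int_{\partial K}\frac{\kappa_K(x)^{\frac{1}{n-1}}}{f(x)^{\frac{2}{n-1}}}\,\dint\mu_{\partial K}(x),
\]
so that $c\,n\,\vol_n(K) \sim N^{-\frac{2}{n-1}}\,C_n\int_{\partial K}\kappa_K^{\frac{1}{n-1}}/f^{\frac{2}{n-1}}\,\dint\mu_{\partial K}$ with $C_n := \frac{(n-1)^{\frac{n+1}{n-1}}\Gamma\left(n+1+\frac{2}{n-1}\right)}{2(n+1)!\,\omega_{n-1}^{\frac{2}{n-1}}}$. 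But $C_n$ grows linearly in $n$ (roughly $C_n\sim n/(4\pi e)$, using $(n-1)^{\frac{n+1}{n-1}}\sim n-1$, $\Gamma(n+1+\frac{2}{n-1})/(n+1)!\sim 1/(n+1)$, and $\omega_{n-1}^{\frac{2}{n-1}}\sim 2\pi e/(n-1)$). Your own phrase ``after absorbing the dimension-dependent constants into $a$'' is exactly the gap: the lemma requires $a$ independent of $n$, so those constants cannot simply be absorbed.

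The paper avoids losing this factor of $n$ by exploiting the smallness of the upper limit $s(ch_K(u))$. By Lemma \ref{lem:Reitzner2} and the choice of $c$, $s(ch_K(u))$ is no larger than roughly the mode $(n-1)/(N-1)$ of the Beta density (this is the content of the Case~1/Case~2 dichotomy, Case~2 being shown impossible). The paper then bounds the truncated integral $\int_0^{s(ch_K(u))}(1-s)^{N-n}s^{n-1}\,\dint s$ not by the full Beta integral but by $(\text{length of interval})\times(\text{value of integrand at the right endpoint})$, which together with the factor $\binom{N}{n}\,n$ produces an extra factor of order $1/n$; this is precisely what cancels against $C_n\sim n$. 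Replacing the truncated integral by the full one, as you do, throws away exactly this gain. To salvage your approach you would at the very least need to keep the factor $(ch_K(u)-z)$ in the integrand (which vanishes as $s\uparrow s(ch_K(u))$) or use the smallness of $s(ch_K(u))$ directly; the crude bound $\int_0^{s(ch_K(u))}\le\int_0^1$ is too lossy for the stated claim.
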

  
\begin{proof}[Proof of Lemma \ref{Schritt5}]	
By definition of $II$, 
\begin{align*}
II &= (1+\delta)^{\frac{3n^2 + 3n}{2}}\, a\, \binom{N}{n}\, n\, \int\limits_{\SSn} \kappa_K(x(u))^{-1}\\
&\qquad \times \int\limits_{0}^{s(c h_K(u))} (1-s)^{N-n}\, s^{n-1}\, (ch_K(u) - z)\, \dint s \dint \mu_{\SSn}(u),
\end{align*}
where $a\in (0,\infty)$ is an absolute constant.
First of all, notice that by (\ref{c}) we get that
\begin{equation}\label{nochmal:c}
c \le a\, \frac{1}{\vol_n(K)\, N^{\frac{2}{n-1}} } \  \int\limits_{\partial K} \frac{\kappa_K(x)^{ \frac{1}{n-1}}}{f(x)^{2\over n-1}}\dint \mu_{\partial K}(x).
\end{equation}
By Lemma \ref{lem:Reitzner2} and again by (\ref{c}),
\begin{align}\label{boundschKu}
\begin{split}
s(c h_K(u)) &\leq (1 + \delta )^{n+1} \  2^{\frac{n-1}{2}} \  \kappa_{n-1}  \  \frac{ f (x(u)) \  h_K(u)^{\frac{n-1}{2}}}{ \kappa_K(x(u))^\frac{1}{2}} \  c^{\frac{n-1}{2}}\\
&\leq \frac{(1 + \delta )^{n+1}\,e^{\frac{1}{12}}\, n}{e \ N}   \ \frac{ f (x(u)) \  h_K(u)^{\frac{n-1}{2}}}{ \kappa_K(x(u))^\frac{1}{2}}    \left(\frac{\int\limits_{\partial K} \frac{\kappa_K(x)^{ \frac{1}{n-1}}}{f(x)^{2\over n-1}}\dint \mu_{\partial K}(x)}{n \ \vol_n(K)}  \right) ^{\frac{n-1}{2}},
\end{split}
\end{align}
since it has been shown in \cite{LudwigSchuettWerner} that
\begin{align*}
\left(\frac{n-1}{n+1}\right)^{\frac{2}{n-1}} \sim \frac{1}{e}\quad \text{and}\quad  \left(\frac{\Gamma\left(n+1+\frac{2}{n-1}\right)}{n!}\right)^{\frac{n-1}{2}} \le e^{\frac{1}{12}} n.
\end{align*}
Now, we distinguish two cases.\\
\underline{Case 1:}   \hskip 6mm  $s(c h_K(u)) \leq \frac{(n-1)^\frac{n-1}{n} }{ n^\frac{1}{2(n-1)} \  N }$.\\
The function $(1-s)^{N-n} s^{n-1}$ has its maximum at $ \frac{n-1}{N-1}$ and $\frac{(n-1)^\frac{n-1}{n}}{  n^\frac{1}{2(n-1)} \  N } \leq \frac{n-1}{N-1}$. 
Therefore, $(1-s)^{N-n} s^{n-1}$ is increasing on $\left[0, \frac{(n-1)^\frac{n-1}{n}}{n^\frac{1}{2(n-1)} \  N }\right]$ and thus, since $  \binom{N}{n}\, n\,  \sim \frac{N^n e^n}{\sqrt{2\pi} \sqrt{n} \    n^{n-1}}$,
\begin{align*} 
&\binom{N}{n}\, n\,   \int\limits_{0}^{s(c h_K(u))} (1-s)^{N-n}\, s^{n-1}\, (ch_K(u) - z)\, \dint s\\
&\qquad \le c\, h_K(u) \ \binom{N}{n} n\,   \int\limits_{0}^{s(c h_K(u))} (1-s)^{N-n}\, s^{n-1}  \dint s\\
&\qquad \le c\, h_K(u) \ \binom{N}{n} n\,  s(c h_K(u))  \left(1-\frac{(n-1)^\frac{n-1}{n}}{n^\frac{1}{2(n-1)} \  N }\right)^{N-n}\, \left(\frac{(n-1)^\frac{n-1}{n}}{n^\frac{1}{2(n-1)} \  N }\right)^{n-1}\\
&\qquad \le a\, c\, h_K(u) \frac{N^n e^n}{\sqrt{2\pi} \sqrt{n} \   n^{n-1}}  \frac{(n-1)^\frac{n-1}{n} }{ n^\frac{1}{2(n-1)} \  N } \left(1-\frac{(n-1)^\frac{n-1}{n}}{n^\frac{1}{2(n-1)} \  N }\right)^{N-n}\, \left(\frac{(n-1)^\frac{n-1}{n}}{n^\frac{1}{2(n-1)} \  N }\right)^{n-1}\\
&\qquad \le a\,  c\, h_K(u)\, \frac{e^n}{n}\, \exp\left(-\frac{(N-n)(n-1)^\frac{n-1}{n}}{n^\frac{1}{2(n-1)} \  N }\right)\\
&\qquad \le a \  \frac{ c\, h_K(u)}{n}, 
\end{align*}
where we used in the last step that $\frac{(n-1)^{\frac{n-1}{n}}}{n^{\frac{1}{2(n-1)}}} \sim n$.
Hence, with (\ref{nochmal:c}) and \eqref{nvol}, 
\begin{align*}
II &\leq (1+\delta)^{\frac{3n^2 + 3n}{2}}\ a \  \frac{c}{n}   \int\limits_{\SSn} \kappa_K(x(u))^{-1}\  h_K(u) \dint \mu_{\SSn}(u)\\
&\leq (1+\delta)^{\frac{3n^2 + 3n}{2}}\ a \  N^{-\frac{2}{n-1}} \ \int\limits_{\partial K} \frac{\kappa_K(x)^{ \frac{1}{n-1}}}{f(x)^{2\over n-1}}\dint \mu_{\partial K}(x) , 
\end{align*}
which finishes the proof of the lemma in Case 1. 
\vskip 2mm
\noindent
\underline{Case 2:}   \hskip 6mm  $s(c h_K(u)) > \frac{(n-1)^{\frac{n-1}{n}}}{ n^\frac{1}{2(n-1)} \  N }$.\\
The inequality $s(c h_K(u)) > \frac{(n-1)^{\frac{n-1}{n}}}{ n^\frac{1}{2(n-1)} \  N }$ is in view of \eqref{boundschKu} equivalent to 
$$
(1+\delta)^{n+1}\,   \   \frac{n\, f (x(u)) \  h_K(u)^{\frac{n-1}{2}}}{e^{\frac{11}{12}}\, \kappa_K(x(u))^\frac{1}{2}}    \left(\frac{\int\limits_{\partial K} \frac{\kappa_K(x)^{ \frac{1}{n-1}}}{f(x)^{2\over n-1}}\dint \mu_{\partial K}(x)}{n \ \vol_n(K)}  \right) ^{\frac{n-1}{2}} > \   \frac{(n-1)^\frac{n-1}{n}}{  n^\frac{1}{2(n-1)}  },
$$
which is equivalent to 
$$
   h_K(u)   \   \frac{\int\limits_{\partial K} \frac{\kappa_K(x)^{ \frac{1}{n-1}}}{f(x)^{2\over n-1}}\dint \mu_{\partial K}(x)}{n \ \vol_n(K)}  > \   
   \frac{e^\frac{11}{6(n-1)} \ (n-1) ^\frac{2}{n}} { (1+\delta)^\frac{2(n+1)}{n-1} \   n^\frac{2n-1}{(n-1)^2} }  \  \frac{\kappa_K(x(u))^{ \frac{1}{n-1}}}{f(x(u))^{2\over n-1}}.
$$
We integrate both sides  over $\partial K$ with respect to $\mu_{\partial K}$ and get that
$$
(1+\delta)^\frac{2(n+1)}{n-1}  \  >  \   \frac{e^\frac{11}{6(n-1)} \ (n-1) ^\frac{2}{n}} {n^\frac{2n-1}{ (n-1)^2} } ,
$$
and arrive at a contradiction, since for all $n\ge 3$ the right hand side is strictly bigger than $1$ and $\delta$ can be chosen arbitrarily small. This shows that Case 2 is not possible and therefore finishes the proof of the lemma.
\end{proof}

\begin{proof}[Proof of Theorem \ref{mainresult}]
Lemma \ref{Schritt4} and Lemma \ref{Schritt5} imply that for sufficiently large $N$ and sufficiently small $\delta > 0$, 
\begin{align}\label{last}
\EE[\text{vol}_n((1-c)K \Delta P_N)] \le (1+\delta)^{\frac{3n^2 + 3n}{2}}\, a\, N^{- \frac{2}{n-1}} \int\limits_{\partial K}  \frac{\kappa_K(x)^{\frac{1}{n-1}}}{f(x)^{2\over n-1}} \dint \mu_{\partial K}(x),
\end{align}
where $a\in (0,\infty)$ is an absolute constant. Taking into account that we were approximating the body $(1-c)K$ instead of $K$, we need to multiply the bound  \eqref{last} by $(1-c)^{-n}$. Since
\begin{align*}
(1-c)^n \ge 1 - nc,
\end{align*}
for sufficiently large $N$,  we have  that $(1-c)^{-n} \le a$, where $a\in (0,\infty)$ is an absolute constant.  
Finally, since the bound \eqref{last} holds for all $\delta > 0$, the theorem follows.
\end{proof}

\end{document}